\newtheorem {theorem} {Theorem} [section]
\newtheorem {lemma} {Lemma} [section]
 \newtheorem{preremark}{Remark}[section]
  \newenvironment{remark}%
    {\begin{preremark}\rm}{\end{preremark}}
     \newtheorem{preremark1}{Example}[section]
    {\begin{preremark1}\rm}{\end{preremark1}}
         \newtheorem{preremark2}{Definition}[section]
  \newenvironment{definition}%
    {\begin{preremark2}\rm}{\end{preremark2}}
\begin{document}

\title{Anomalous diffusion in polymers: long-time behaviour}

\author{Dmitry A. Vorotnikov}

\date{}

\maketitle

\begin{center}

CMUC (Centro de Matemática da Universidade de Coimbra)

Apartado 3008, 3001 - 454 Coimbra, Portugal

mitvorot@mat.uc.pt

\end{center}





\begin{abstract}
We study the
Dirichlet boundary value problem for viscoelastic diffusion in polymers. We show that its weak solutions generate a dissipative semiflow. We construct the minimal trajectory attractor and the global attractor for this problem.
\end{abstract}



\section {Introduction}

\renewcommand{\theequation}{\arabic{section}.\arabic{equation}}

\numberwithin{equation}{section}

\newcommand {\R} {\mathbb{R}}
\newcommand {\E} {\mathbf{E}}
\def\be{\begin{equation}}
\def\ee{\end{equation}}
\def\fr#1#2{\frac{\partial #1}{\partial #2}}

The concentration behaviour for diffusion of penetrant liquids in
poly\-mers cannot always be described by the Fickian diffusion
equation \be\fr u t = div (D(u) \nabla u),\ee where $u=u(t,x)$ is
the concentration, which depends on time $t$ and the spatial point
$x$, and $D(u)$ is the diffusion coefficient. The phenomena
running counter to (1.1) include \textit{case II diffusion},
\textit{sorption overshoot}, \textit{literal skinning, trapping
skinning} and \textit{desorption overshoot} \cite{chn1, ed,ed2,
cc, tw,tw1, var, wit}. There is a number of approaches which
explain these non-Fickian properties of polymeric diffusion. They
have much in common: they are usually based on taking into account
the viscoelastic nature of polymers (cf. \cite{lee} and references
therein) and on the possibility of glass-rubber phase transition
(see e.g. \cite{wit} with some review). We are going to study the
model which is due to Cohen et al. \cite{chn0,chn1,chn3}. The
Fickian diffusion equation is replaced by the system \be\fr u t =
D \Delta u + E\Delta\sigma,\ee \be \fr {\sigma} t +
\beta(u,\sigma)  \sigma = \mu u+ \nu \fr u t. \ee Here the second
variable $\sigma(t,x)$ is introduced (it is called
\textit{stress}),  $D$ and $E$ are the diffusion and
stress-diffusion coefficients, resp., $\mu$ and $\nu$ are
non-negative constants, and the scalar function $\beta$ is the
inverse of the relaxation time, for instance, $\beta$ can be
\cite{chn1} taken in the following form: \be\beta=\beta(u)= \frac
1 2 (\beta_R+ \beta_G)+ \frac 1 2 (\beta_R- \beta_G) \tanh (\frac
{u- u_{RG}} {\delta})\ee where $\beta_R,\beta_G, \delta, u_{RG}$
are positive constants, $\beta_R>\beta_G$\footnote{Formula (1.4)
describes the following peculiarities of the processes under
consideration. The polymer network in the glassy state (low
concentration area) is severely entangled, so $\beta$ is
approximately equal to some small $\beta_G$. In the high
concentration areas the system is in the rubbery state: the
network disentangles, so the relaxation time is small, and its
inverse is close to $\beta_R>\beta_G$. The glass-rubber phase
transition occurs near a certain concentration $u_{RG}$. However,
we assume that $\beta$ also depends on stress, cf.
\cite{am2,ed2,var}.}.

Well-posedness issues for initial-boundary value problems for systems of viscoelastic diffusion equations  have been studied in \cite{am1,am2,bei,diss,var,nova}, see \cite{var,iter} for brief reviews. These results include investigation of system (1.2),(1.3) and more general settings (diffusion with variable coefficients). Let us only recall the main results on global (in time) solvability:
strong solutions exist globally for $\nu=0$ and $D=E$ in the one-dimensional case \cite{am1}, and for suitable non-constant stress-diffusion coefficient, but not for all initial and boundary data \cite{bei}; global existence of weak solutions for the Dirichlet and Neumann problems in the general setting with variable coefficients in the multidimensional case is shown in \cite{var} and \cite{nova}, resp., without restrictions on the initial and boundary data.

Let us also mention here a study of a system obtained from (1.2)--(1.3) by some simplification, in \cite{riv}, and paper \cite{hu}, which touches upon some long-time behaviour issues for a free boundary problem for a polymeric diffusion model based on Fick's law. A result on long-time behavior (not in the "attractor framework") of the general second boundary value problem can be found in \cite{nova}.

In this work we are interested in the long-time behaviour of the solutions to Dirichlet initial-boundary value problem for system (1.2),(1.3). We show that the weak solutions generate a semiflow on a suitable phase space with $L_2$-topology (this means that there is a unique solution for any data from the phase space, and the solution semigroup is continuous in $t$ and $x$). However, it is not clear whether this semigroup is asymptotically compact, and the phase space is not complete, so this impedes proving of existence of the usual global attractor for this semigroup in this phase space. A possible way out is to use the concept of minimal trajectory attractor. Thus, we construct a minimal trajectory attractor, which generates some generalized global attractor for the weak solutions of the problem in the (completed) phase space.

The theory of trajectory attractors was created by G. Sell, M. Vishik and V. Chepyzhov \cite{vis2,cvbook,sel},  in order to construct an attractor to weak solutions of the 3D Navier-Stokes equation. A generalization of this approach with a related notion of minimal trajectory attractor may be found in \cite{gruy,jmfm}; it is applicable when the system lacks continuity properties or invariance of the trajectory space with respect to time shifts. In both theories, the trajectory attractor generates some generalized global attractor in the phase space. This
 global attractor has many usual properties of attractors, but its invariance may be shown only under additional conditions (see \cite[Section 4.2.7]{gruy}).
  The idea of trajectory attractor was slightly criticized by Sir J. Ball \cite{ball}, for the evolution of the original system is not explicitly involved in its definition. However, an example \cite[Remark 4.2.13]{gruy} shows that the minimal trajectory attractor (and the corresponding global attractor) can well characterize the long-time behaviour of the system, even when the usual global attractor does not exist.
That example also illustrates that such situations may appear, in particular, for problems with uniqueness of solutions (i.e. when there exists a solution semigroup). It is a rather unexpected fact because the original theory of trajectory attractors was developed for the problems where the uniqueness
is not proved or is absent\footnote{Trajectory attractors for problems with uniqueness were investigated in \cite{cvbook} only as an intermediate step on the way to usual global attractors of semigroups.}.  Similarly, in this paper we apply the theory of minimal trajectory attractors to a problem with uniqueness. However, in our case, we cannot insist that there is no attractor of the semigroup (semiflow), since this is unknown.

Our paper
is organized
in the following way. In Section 2, we introduce the required function spaces.
In Section 3, we give a weak formulation of the initial-boundary value problem for system (1.2)-(1.3) with existence, uniqueness and regularity results (Theorem 3.2, Remark 3.4).
In Section 4, we recall the basic issues of the classical and minimal trajectory attractor theories, and construct a dissipative semiflow generated by weak solutions of (1.2)-(1.3) (Theorem 4.10). In Section 5, we show that the \textit{trajectory space} generated by the weak solutions possesses a minimal trajectory attractor and a global attractor (Theorem 5.2).

\section {Function spaces and related notations}
$L_p (\Omega) $, $W_p^{m} (\Omega)
$, $H^{m} (\Omega) =$ $W_2^{m} (\Omega) $ $(m\in\mathbb{Z}, 1 \leq
p \leq \infty)$, $H^{m}_0 (\Omega) = \stackrel{\circ}{W}{}_2^{m} (\Omega) $ $(m \in \mathbb{N})$ are, as usual, Lebesgue and Sobolev
spaces of functions defined on a bounded open set (domain) $\Omega\subset
\R^n$, $n\in \mathbb{N}$. The scalar product and the Euclidian norm in
$L_2(\Omega)^k=L_2(\Omega, \R^k)$ are denoted by $(u,v)$ and
$\|u\|$, respectively ($k$ is equal to $1$ or $n$). In $H^1_0(\Omega)$, we use the following scalar product and norm:
$(u,v)_1=(\nabla u, \nabla v), \|u\|_1=\|\nabla u\|$.
We recall Friedrichs' inequality \be \|u\|\leq K_\Omega\|u\|_1.\ee


Let $L^1_2(\Omega)$ denote the topological subspace of $L_2(\Omega)$ consisting of functions from $H^1_0(\Omega)$.

The Laplace operator $\Delta: H^1_0(\Omega)\to H^{-1}(\Omega)$ is an isomorphism.
Therefore \be\Delta^{-1}: H^{-1}(\Omega)\to H^{1}_0(\Omega)\ee is also an isomorphism.
Set $X=X(\Omega)=\Delta^{-1}(H^{1}_0(\Omega))$. The scalar product and norm in $X$ are $(u,v)_X = (\Delta u, \Delta v)_1$, $\|u\|_X = \|\Delta u\|_1$.

As usual, we identify the space $H^{-1}(\Omega)$ with the space of linear continuous functionals on
$H_0^1(\Omega)$ (the dual space). The value of a functional from
$H ^ {-1}(\Omega) $ on an element from $H_0^1(\Omega) $ is denoted
by $ \langle\cdot, \cdot\rangle $ (the "bra-ket" notation).  The scalar product and norm in $H^{-1}(\Omega)$ are $(u,v)_{-1} = (\Delta^{-1} u, \Delta^{-1} v)_1$, $\|u\|_{-1} = \|\Delta^{-1} u\|_1$. Note that
\be (u,\Delta v)_{-1}= -\langle u, v\rangle,\ u\in H ^ {-1}(\Omega), v \in H_0^1(\Omega). \ee

The symbols $C (\mathcal{J}; E) $, $L_2
(\mathcal{J}; E) $ etc. denote the spaces of continu\-ous, quadratically integrable etc. functions on an interval
$\mathcal{J}\subset \mathbb {R} $ with
values in a Banach space $E $.

Let us remind that a pre-norm in the Frechet space $C ([0, +
\infty); E) $ may be defined by the formula
$$ \| v \| _{C ([0, +\infty);E)} =\sum\limits _ {i=1} ^ {+ \infty} 2 ^ {-i} \frac {\|v \| _ {C ([0, i]; E)}} {1 + \| v \| _ {C ([0, i]; E)}}. $$

If $E$ is a function space ($L_2(\Omega), H^m(\Omega)$ etc.), then
we identify the elements of $C (\mathcal{J}; E) $,
$L_2(\mathcal{J};E)$ etc. with scalar functions defined on
$\mathcal{J}\times \Omega$ according to the formula
$$u(t)(x)=u(t,x),\, t\in \mathcal{J}, x\in \Omega.$$

We shall also use the function space ($T$ is a positive number):
$$ W=W(\Omega, T) = \{u\in L_2 (0, T; H^1_0 (\Omega)), \
u' \in L_2 (0, T;
 H ^ {-1} (\Omega)) \},$$ $$\|u\|_{W}=\|u\|_{L_2 (0, T; H^1_0(\Omega))}+\|u'\|_{L_2 (0, T; H ^ {-1}(\Omega))};$$

\cite[Corollary 2.2.3]{gruy} implies continuous embedding $W\subset
C([0,T];L_2(\Omega))$. Moreover, \be \left\langle  v',v\right\rangle=\frac 1 2\frac {d}{dt}\|v\|^2,\ v\in W.\ee

We also denote

$$ W_{loc}(\Omega,+\infty)= \{u\in C ([0, +\infty); L_2 (\Omega)), \
u|_{[0,T]} \in W(\Omega, T)\ \forall T>0\},$$

$$H^1_{loc}(0,+\infty;H^1_0(\Omega)) = \{u\in C ([0, +\infty); H^1_0(\Omega)), \
u|_{[0,T]} \in H^1(0,T;H^1_0(\Omega))\ \forall T>0\}.$$

We use the notation $|\cdot|$ for the absolute value of a number and for the Euclidean
norm in $\R^n$.

The symbols $C$ stands for various positive constants.

Note that  \be\langle u,v \rangle= ( u,v ),\ u\in L_2(\Omega), v \in H^1_0(\Omega)\ee and, since $H^{-1}(\Omega) \subset L_2(\Omega)$, one has
\be \|u\|_{-1}\leq C\|u\|.\ee

\section {Basic properties of the boundary value problem}

We study the diffusion of
a penetrant in a polymer filling a bounded domain $\Omega\subset\mathbb{R}^n$,
$n\in \mathbb{N}$, which is described by the following
boundary value problem:

\be \fr u t = D\Delta u + E \Delta \sigma,\ (t,x)\in [0,\infty)\times \Omega,\ee \be \fr {\sigma} t
+ \beta_0 (u,\sigma) \sigma = \mu u + \nu \fr u t,\ (t,x)\in [0,\infty)\times \Omega, \ee
\be u(t,x)=\varphi(x),\ (t,x)\in [0,\infty)\times \partial\Omega. \ee


Here $u=u(t,x):[0,\infty)\times \overline{\Omega} \to \R$ is the
unknown concentration of the penetrant (at the spatial point $x$
at the moment of time $t$), $\sigma=\sigma(t,x):[0,\infty)\times
\overline{\Omega} \to \R$ is the unknown stress,
$\varphi:\partial\Omega \to \R$ is a given boundary condition,
$\mu,\nu$, $D$ and $E$ are positive constants\footnote{The case
$\mu=0$ (''the Maxwell model'' \cite{dur,wit}) is admissible as
well.}, $ \beta_0: \R^{2}\to \R$ is a given function, $
\beta_R\geq\beta_0(u,\sigma)\geq  \beta_G>0$;
$\Omega\subset\mathbb{R}^n$ is supposed to be a bounded open set
such that $ X(\Omega)\subset W^1_{p_0}(\Omega)$ for some $p_0>2$
(cf. \cite{var}). For definiteness, we assume that the boundary
condition for the stress is also prescribed: \be
\sigma(t,x)=\phi(x),\ (t,x)\in [0,\infty)\times \partial\Omega.
\ee

Equation (3.2) yields the following relation between the boundary conditions $\phi$ and $\varphi$:
\be  \beta_0 (\varphi,\phi) \phi = \mu \varphi,\ x\in \partial\Omega. \ee

W.l.o.g. the functions $\varphi$ and $\psi$ are defined on $\overline{\Omega}$.

We assume that the functions $ \beta_0, \phi$ and $\psi$ are $C^2-$ smooth.
Moreover, for simplicity, let $ \beta_0(u,\sigma)\equiv \beta_\infty\geq\beta_G$ for large $|u|+|\sigma|$ (this assumption is admissible in the considered model, see Remark 3.6 below).


Set $$v(t,x)=u(t,x)-\varphi(x), \varpi(t,x)=\sigma(t,x) - \phi(x),$$
$$\beta(x,v,\varpi)=\beta_0(v+\varphi(x),\varpi+\phi(x)), $$
$$h(x)=D\Delta \varphi (x)+ E \Delta \phi(x),$$
$$g(x,v,\varpi)=\mu \varphi(x)-\beta_0(v+\varphi(x),\varpi+\phi(x))\phi(x).$$

Then we can rewrite (3.1)-(3.4) in the following form:
\be \fr v t = D\Delta v + E \Delta \varpi+h,\ee \be \fr {\varpi} t
+ \beta (x,v,\varpi) \varpi = g(x,v,\varpi)+\mu v + \nu \fr v t, \ee
\be v|_{\partial\Omega}=\varpi|_{\partial\Omega}= 0. \ee

This form of the studied problem will be useful below.
However, in order to set the problem finally, we need the variable
$\tau(t,x)=\varpi(t,x)-\nu v(t,x).$
We denote $$d=D+\nu E,$$  $$\gamma\left(x,v,\tau\right)=\mu v-\beta(x,v,\tau+\nu v)\tau-\nu\beta(x,v,\tau+\nu v)v+g(x,v,\tau+\nu v).$$
Then problem (3.6)-(3.8) becomes
\be \fr v t = d\Delta v + E \Delta \tau+h,\ee \be \fr {\tau} t = \gamma(x,v,\tau),\ee \be v|_{\partial\Omega}=\tau|_{\partial\Omega}= 0. \ee
It can be completed with the initial condition:

\be v(0,x)=v_0(x), \ \tau(0,x) = \tau_0(x),\ x\in\Omega. \ee

\begin{definition}  A pair of
functions $(v,\tau)$ from the class \be v \in W_{loc}(\Omega,+\infty), \tau\in
H^1_{loc}(0,+\infty;H^1_0(\Omega))\ee is a {\it weak} solution to problem (3.9)-(3.11) if equality (3.9) holds in the space $H^{-1}(\Omega)$ for a.a.
$t\in(0,+\infty)$, and (3.10)  holds in $H^{1}(\Omega)$ a.e. on
$(0,+\infty)$. \end{definition}

\begin{theorem} Given $v_0 \in L_2(\Omega)$ and $\tau_0\in H^1_0(\Omega)$, there exists a unique weak solution to problem (3.9)-(3.11) which belongs to (3.13) and satisfies (3.12).\end{theorem}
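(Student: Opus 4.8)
The plan is to use the structure of the system: since (3.10) is, pointwise in $x$, an ordinary differential equation carrying no smoothing, $\tau$ is determined by $v$ (nonlocally in time), and the problem is essentially about the parabolic equation (3.9). I would build a solution by a Faedo--Galerkin scheme --- Galerkin in $v$, with the ordinary differential equation for $\tau$ solved \emph{exactly} at each level --- establish \emph{a priori} bounds uniform in the discretization, pass to the limit by compactness, and then prove uniqueness by an energy estimate for the difference of two solutions. Before anything else I would record the facts about the data that make this work. Because $\varphi,\phi\in C^2(\overline\Omega)$, one has $h\in L_2(\Omega)\subset H^{-1}(\Omega)$. Because $\beta_0\in C^2$ and $\beta_0\equiv\beta_\infty$ for large $|u|+|\sigma|$, the function $\gamma$ is globally Lipschitz in $(v,\tau)$, uniformly in $x\in\overline\Omega$, is of at most linear growth in $(v,\tau)$, and has bounded partial derivatives $\gamma_x,\gamma_v,\gamma_\tau$ on $\overline\Omega\times\R^2$. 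Finally, the compatibility relation (3.5) yields $\gamma(x,0,0)=0$ for $x\in\partial\Omega$; this is exactly what will keep $\tau$ inside $H^1_0(\Omega)$.

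For the scheme, let $\{e_k\}$ be the $L_2(\Omega)$-orthonormal basis of eigenfunctions of the Dirichlet Laplacian (also orthogonal in $H^1_0(\Omega)$ and $H^{-1}(\Omega)$), write $v_m=\sum_{k\le m}c_k(t)e_k$, and couple the Galerkin projection of (3.9) with the exact Cauchy problem $\tau_m'=\gamma(\cdot,v_m,\tau_m)$, $\tau_m(0)=\tau_0$. Since $\gamma$ is globally Lipschitz, the coupled system for $(c_1,\dots,c_m,\tau_m)$ in $\R^m\times L_2(\Omega)$ has a unique local solution by the Cauchy--Lipschitz theorem. Differentiating the $\tau_m$-equation in $x$ shows that each $\partial_i\tau_m$ satisfies a linear ordinary differential equation with bounded coefficients and forcing $\gamma_{x_i}+\gamma_v\,\partial_i v_m\in L_2(0,T;L_2(\Omega))$, so $\tau_m\in C([0,T];H^1(\Omega))$; and a Gronwall argument for $t\mapsto\|\tau_m(t)\|_{L_2(\partial\Omega)}$, using $v_m|_{\partial\Omega}=0$ and $\gamma(\cdot,0,0)|_{\partial\Omega}=0$, forces $\tau_m(t)\in H^1_0(\Omega)$.

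The uniform estimates are the core of the matter. Pairing the $v_m$-equation with $v_m$ and using $(\Delta\tau_m,v_m)=-(\nabla\tau_m,\nabla v_m)$ gives $\tfrac12\tfrac{d}{dt}\|v_m\|^2+d\|\nabla v_m\|^2=-E(\nabla\tau_m,\nabla v_m)+(h,v_m)$; differentiating the $\tau_m$-equation in $x$ and pairing with $\nabla\tau_m$ gives $\tfrac12\tfrac{d}{dt}\|\nabla\tau_m\|^2=(\gamma_x+\gamma_v\nabla v_m+\gamma_\tau\nabla\tau_m,\nabla\tau_m)$. By Young's inequality together with Friedrichs' inequality (2.1), the terms $(h,v_m)$, $E(\nabla\tau_m,\nabla v_m)$ and $(\gamma_v\nabla v_m,\nabla\tau_m)$ are each dominated by a small multiple of $d\|\nabla v_m\|^2$ plus a multiple of $\|\nabla\tau_m\|^2$ and a constant; adding the two relations and choosing these small parameters so that a positive share of $d\|\nabla v_m\|^2$ survives, one arrives at $\tfrac{d}{dt}\bigl(\|v_m\|^2+\|\nabla\tau_m\|^2\bigr)+c_0\|\nabla v_m\|^2\le C\bigl(1+\|v_m\|^2+\|\nabla\tau_m\|^2\bigr)$ with $c_0>0$. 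Gronwall's lemma then bounds $v_m$ in $C([0,T];L_2(\Omega))\cap L_2(0,T;H^1_0(\Omega))$ and $\tau_m$ in $C([0,T];H^1_0(\Omega))$ uniformly in $m$ and for every $T>0$ (so the local solutions are global); comparing in the equations, $\|v_m'\|_{L_2(0,T;H^{-1})}\le C$ (here $\|\Delta w\|_{-1}=\|\nabla w\|$ and the orthogonality of $\{e_k\}$ are used) and $\|\tau_m'\|_{L_2(0,T;H^1_0)}=\|\gamma(\cdot,v_m,\tau_m)\|_{L_2(0,T;H^1_0)}\le C\bigl(1+\|v_m\|_{L_2(0,T;H^1_0)}\bigr)$. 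By the Aubin--Lions--Simon lemma, along a subsequence $v_m\rightharpoonup v$ in $L_2(0,T;H^1_0)$ with $v_m'\rightharpoonup v'$ in $L_2(0,T;H^{-1})$ and $v_m\to v$ strongly in $L_2((0,T)\times\Omega)$, while $\tau_m\rightharpoonup\tau$ in $H^1(0,T;H^1_0)$ and $\tau_m\to\tau$ in $C([0,T];L_2(\Omega))$. The linear terms pass to the limit by weak convergence, and the global Lipschitz bound gives $\gamma(\cdot,v_m,\tau_m)\to\gamma(\cdot,v,\tau)$ in $L_2((0,T)\times\Omega)$; hence $(v,\tau)$ satisfies (3.9)--(3.10), retains the initial data (3.12), and lies in the class (3.13) (in particular $\tau\in C([0,+\infty);H^1_0)$ because $\tau'\in L_2(0,T;H^1_0)$ for all $T$).

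For uniqueness, let $(v_1,\tau_1)$ and $(v_2,\tau_2)$ be two solutions with the same data and put $\bar v=v_1-v_2$, $\bar\tau=\tau_1-\tau_2$. Testing the $H^{-1}(\Omega)$-valued identity $\bar v'=d\Delta\bar v+E\Delta\bar\tau$ against $\Delta^{-1}\bar v\in H^1_0(\Omega)$ and using (2.1)--(2.5) gives $\tfrac12\tfrac{d}{dt}\|\bar v\|_{-1}^2+d\|\bar v\|^2=-E(\bar v,\bar\tau)$; multiplying $\bar\tau'=\gamma(\cdot,v_1,\tau_1)-\gamma(\cdot,v_2,\tau_2)$ by $\bar\tau$ and invoking the Lipschitz bound gives $\tfrac12\tfrac{d}{dt}\|\bar\tau\|^2\le C\bigl(\|\bar v\|+\|\bar\tau\|\bigr)\|\bar\tau\|$. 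Adding these and absorbing $\|\bar v\|\,\|\bar\tau\|$ into $d\|\bar v\|^2$ gives $\tfrac{d}{dt}\bigl(\|\bar v\|_{-1}^2+\|\bar\tau\|^2\bigr)\le C\bigl(\|\bar v\|_{-1}^2+\|\bar\tau\|^2\bigr)$, and since this quantity vanishes at $t=0$, Gronwall forces $\bar v\equiv0$ and $\bar\tau\equiv0$. The point of testing $\bar v$ against $\Delta^{-1}\bar v$ rather than against $\bar v$ is to avoid any bound for $\nabla\bar\tau$, which would bring in $\nabla v_i$, of which only the $L_2$-in-time norm is under control. This last remark also identifies the step I expect to be the main obstacle: since (3.10) has no smoothing, $\tau$ can only inherit spatial regularity through its coupling with $v$, whose gradient is merely square-integrable in time --- this is why $H^1_{loc}(0,+\infty;H^1_0(\Omega))$, and nothing better, is the natural class for $\tau$, why the particular combination of test functions above is forced, and why (3.5) must be used to keep $\tau|_{\partial\Omega}=0$. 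The remaining ingredients --- linear parabolic theory, Aubin--Lions compactness, Gronwall's lemma --- are routine.
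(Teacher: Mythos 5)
Your argument is sound and reaches the stated conclusion, but it takes a genuinely different route from the paper on both halves of the theorem. For existence, the paper does not run a Galerkin scheme: it verifies that $\nabla\gamma$ admits the decomposition $h_1\nabla v+h_2\nabla\tau+\theta$ with $h_1,h_2$ bounded and $\theta$ of linear growth (exactly the structural fact your $H^1$-estimate for $\tau_m$ exploits) and then invokes the existence theorem of \cite{var}, which delivers a pair satisfying (3.9) and $\Delta\tau'=\Delta[\gamma(x,v,\tau)]$ in $H^{-1}(\Omega)$; the only new work is upgrading this to (3.10) in $H^1(\Omega)$, i.e.\ showing $\gamma(\cdot,v(t),\tau(t))\in H^1_0(\Omega)$, which the paper does by approximating $v(t),\tau(t)$ with compactly supported smooth functions and using Krasnoselskii's theorem on Nemytskii operators together with the compatibility relation (3.5). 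Your self-contained construction buys independence from \cite{var}, at the price of re-deriving the a priori bounds; the one step where you are on slightly thinner ice is the claim $\tau_m(t)\in H^1_0(\Omega)$ via a Gronwall estimate for $\|\tau_m(t)\|_{L_2(\partial\Omega)}$ --- this presupposes that $H^1_0(\Omega)$ is characterized by vanishing trace, hence some boundary regularity, whereas the paper only assumes $\Omega$ bounded and open with $X(\Omega)\subset W^1_{p_0}(\Omega)$, and its Nemytskii/weak-closure argument works without any trace theory (you could substitute that argument for yours at the limit stage). For uniqueness, the paper reverts to the variables $(v,\varpi)$ with $\varpi=\tau+\nu v$, where the stress equation contains $\nu v'$, and must therefore test the $v$-equation with $\mu w+\nu w'$ in $H^{-1}(\Omega)$ so that the terms $\pm\nu E\langle w',\xi\rangle$ cancel, arriving at a Gronwall inequality for $\|w\|^2+\|\xi\|^2$; you stay in the $(v,\tau)$ variables, where (3.10) has no $v'$ term, and the simpler pairing ($\bar v$ in the $H^{-1}$ inner product, $\bar\tau$ in $L_2$) closes the estimate on $\|\bar v\|_{-1}^2+\|\bar\tau\|^2$ using only the global Lipschitz property of $\gamma$ (which, as you correctly note, hinges on $\beta_0\equiv\beta_\infty$ at infinity). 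Your version is arguably cleaner --- it is the pay-off of the substitution $\tau=\varpi-\nu v$ that the paper itself introduces but then does not use in its uniqueness proof; both lead to the same Gronwall conclusion.
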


Note that (3.12) makes sense due to the embeddings $W(\Omega,T)\subset C([0,T];L_2(\Omega)),$\\ $ H^1(0,T;H^1_0(\Omega))\subset
C([0,T];H^1_0(\Omega)), T>0.$

\begin{proof} Let us calculate the gradient of $\gamma(x,v,\tau)$:
$$\nabla\gamma (x,v,\tau) $$ 
$$= \mu \nabla v- \beta (x,v,\tau+\nu v) \nabla \tau -\fr{\beta}{x} (x,v,\tau+\nu v) \tau$$ $$-\fr{\beta}{v} (x,v,\tau+\nu v) \tau\nabla v $$ $$-\nu\fr{\beta}{\varpi} (x,v,\tau+\nu v) \tau\nabla v-\fr{\beta}{\varpi} (x,v,\tau+\nu v) \tau\nabla\tau$$ $$- \nu \beta (x,v,\tau+\nu v) \nabla v -\nu \fr{\beta}{x} (x,v,\tau+\nu v) v$$ $$-\nu\fr{\beta}{v} (x,v,\tau+\nu v) v\nabla v -\nu^2\fr{\beta}{\varpi} (x,v,\tau+\nu v) v\nabla v-\nu\fr{\beta}{\varpi} (x,v,\tau+\nu v) v\nabla\tau$$ $$+ \fr{g}{x}(x,v,\tau+\nu v) +\fr{g}{v}(x,v,\tau+\nu v)  \nabla v +\nu\fr{g}{\varpi}(x,v,\tau+\nu v)  \nabla v +\fr{g}{\varpi} (x,v,\tau+\nu v)\nabla\tau.$$

Compactness of $\Omega$ gives boundedness of $\phi$ and $\varphi$. Hence, the function $\beta(x,v,\varpi)$ is equal to $\beta_\infty$ for large  $|v|+|\varpi|$.
This implies that the gradient $\nabla\gamma(x,v,\tau)$ can be expressed in the form $$h_1(x,v,\tau)\nabla v+h_2(x,v,\tau)\nabla \tau+\theta(x,v,\tau),$$ where $$ |h_1(x,v,\tau)|+ |h_2(x,v,\tau)|\leq K_h,$$
$$ |\theta(x,v,\tau)|\leq K_\theta(|v|+|\tau|+1)$$
with some constants $K_h, K_\theta$.

Therefore, by \cite[Theorem 3.1]{var}, there is a pair of functions $(v,\tau)$ from (3.13) which satisfies (3.9), (3.12) and  \be \Delta\fr {\tau} t = \Delta[\gamma(x,v,\tau)],\ee in $H^{-1}(\Omega)$ a.e. on
$(0,+\infty)$
(more precisely, that theorem gives existence of solutions on finite time intervals, but then the solution on the  positive semi-axis can be constructed step by step in a standard way, see \cite{iter}).

Let us show that $(v,\tau)$ satisfies (3.10). Since $\fr {\tau} t\in H^1_0(\Omega)$ for a.a. $t>0$, it suffices to prove that $\gamma(x,v,\tau)\in H^1_0(\Omega)$ for a.a. $t>0$. Observe that $\gamma(x,0,0)\equiv 0$ due to (3.5). The above representations of $\gamma(x,v,\tau)$ and $\nabla \gamma(x,v,\tau)$ imply $\gamma(x,v(t,x),\tau(t,x))$ $\in H^1(\Omega)$ (for a.a. $t>0$). Let $v_m,\tau_m$ be sequences of smooth functions with compact supports in $\Omega$, $v_m \to v(t),\tau_m \to \tau(t)$ in $H^1(\Omega)$.  Then $\gamma(\cdot,v_m,\tau_m)\in H^1_0(\Omega)$. It remains to observe that  $\gamma(\cdot,v_m,\tau_m)\to \gamma(\cdot,v(t),\tau(t))$ weakly in $H^1(\Omega)$. Really,  by Krasnoselskii's theorem \cite{kras,skry} on
continuity of Nemytskii operators we have $$\gamma(\cdot,v_m,\tau_m)\to \gamma(\cdot,v(t),\tau(t)), h_1(\cdot,v_m,\tau_m)\to h_1(\cdot,v(t),\tau(t)),$$ $$h_2(\cdot,v_m,\tau_m)\to h_2(\cdot,v(t),\tau(t)), \theta(\cdot,v_m,\tau_m)\to \theta(\cdot,v(t),\tau(t))$$ strongly in $L_2(\Omega)$. Moreover, due to boundedness of $h_1$ and $h_2$, w.l.o.g. we may assume that $h_1(\cdot,v_m,\tau_m)\to h_1(\cdot,v(t),\tau(t))$, $h_2(\cdot,v_m,\tau_m)\to h_2(\cdot,v(t),\tau(t))$ *-weakly in $L_\infty(\Omega)$. But $\nabla v_m\to  \nabla v(t)$, $\nabla \tau_m\to  \nabla\tau(t)$ strongly in $L_2(\Omega)$. Therefore, $$h_1(\cdot,v_m,\tau_m)\nabla v_m\to h_1(\cdot,v(t),\tau(t))\nabla v(t),h_2(\cdot,v_m,\tau_m)\nabla\tau_m\to h_2(\cdot,v(t),\tau(t))\nabla \tau(t)$$ weakly in $L_2(\Omega)$.

\begin{remark} It seems that a more profound reasoning of this kind proves that for any Nemytskii operator $\mathcal{N}:H^1(\Omega)\to H^1(\Omega)$ one has $\mathcal{N}(\xi)- \mathcal{N}(0)\in H^1_0(\Omega)$ for all $\xi\in H^1_0(\Omega)$.\end{remark}

It remains to prove uniqueness. If $\varpi=\tau+\nu v$, then $\varpi\in W_{loc}(\Omega,+\infty)$, and the pair $(v,\varpi)$ satisfies (3.6), (3.7). It suffices to show uniqueness of the pair $(v,\varpi)$. Let $(v_1,\varpi_1)$, $(v_2,\varpi_2)$ be solutions of (3.6), (3.7) in the class $W_{loc}(\Omega,+\infty)\times W_{loc}(\Omega,+\infty)$ with the same initial conditions. Let us denote $w=v_1 - v_2,$ $ \xi=\varpi_1 -\varpi_2$. Then
\be w' = D\Delta w + E \Delta \xi,\ee \be \xi'
+ \beta (x,v_1,\varpi_1) \varpi_1 -\beta (x,v_2,\varpi_2) \varpi_2= g(x,v_1,\varpi_1)-g(x,v_2,\varpi_2)+\mu w + \nu w' . \ee

Calculate the
$H^{-1}(\Omega)$-scalar product of (3.15) and $\mu w +\nu w'$ for a.a. $t\in (0,\infty)$, and take (2.3) and (2.5) into account:
\be \mu( w',w)_{-1} + \nu( w',w')_{-1}=- \mu D(w, w)  - \mu E(\xi, w) -\nu D\left\langle w', w\right\rangle  - \nu E\left\langle w',\xi\right\rangle.\ee
Take the
"bra-ket"
 of (3.16) and $E\xi$ for a.a. $t\in (0,\infty)$, and remember (2.5):
\be E\left\langle\xi',\xi\right\rangle
+ E (\beta (x,v_1,\varpi_1) \varpi_1 -\beta (x,v_2,\varpi_2) \varpi_2, \xi)=$$ $$ E(g(x,v_1,\varpi_1)-g(x,v_2,\varpi_2), \xi)+\mu E(w, \xi) + \nu E \left\langle w',\xi\right\rangle. \ee
Adding (3.17) and (3.18), and omitting the second term, which is positive, we conclude:
\be \frac \mu 2 \frac{d \|w\|^2_{-1}}{d t}+ \mu D\|w\| ^2 + \frac {\nu D} 2\frac{d \|w\|^2}{d t}  +\frac E 2 \frac{d \|\xi\|^2}{d t}$$ $$\leq E (\beta_0 (v_2+\varphi,\varpi_2+\phi) (\varpi_2+\phi) -\beta_0 (v_1+\varphi,\varpi_1+\phi) (\varpi_1+\phi) , \xi)$$ $$\leq C(\|\xi\|+\|w\|)\|\xi\|.\ee The last inequality follows from boundedness of $\fr {[\beta_0(u,\sigma)\sigma]}{u}$ and $\fr {[\beta_0(u,\sigma)\sigma]}{\sigma}$, Lagrange's theorem and the Cauchy-Buniakowski inequality.

Integration from $0$ to $t$ yields $$ \frac \mu 2 \|w\|^2_{-1}+ \mu D\int\limits_0^t \|w(s)\| ^2\,ds + \frac {\nu D} 2\|w\|^2  +\frac E 2 \|\xi\|^2 $$ $$\leq C\int \limits_0^t(\|\xi(s)\|^2+\|w(s)\|^2)\,ds.$$
Hence, \be \|w(t)\|^2  + \|\xi(t)\|^2\leq C\int \limits_0^t(\|\xi(s)\|^2+\|w(s)\|^2)\,ds,\ t\geq 0.\ee
Thus, by the Gronwall lemma, $\xi\equiv w\equiv 0$. \end{proof}

\begin{remark} Weak solutions $(v(t),\tau(t))$ to (3.9)-(3.11) belong to $H ^ {1}_0 (\Omega) ^2$ for all $t>0$.
It follows from a simple regularity result for reaction-diffusion equations. Consider the problem \be \fr \upsilon t - a\Delta \upsilon =f,\ee \be \upsilon|_{\partial\Omega}= 0, \ee
\be \upsilon(0,x)=\upsilon_0(x),\ x\in\Omega. \ee
Given $a>0$, $\upsilon_0 \in L_2(\Omega)$ and $f\in L_2(0,T; H^{-1}(\Omega))$, there exists a unique weak solution $\upsilon\in W(\Omega, T)$, $T>0$. Since $W(\Omega, T)\subset C([0,T],L_2(\Omega))$, the operator $$\Upsilon: L_2(\Omega)\times L_2(0,T; H^{-1}(\Omega))\to L_2(\Omega),\ \Upsilon(\upsilon_0,f)=\upsilon(t), 0<t \leq T,$$ is well-defined. Since $\tau \in H^1(0,T;H ^ {1}_0 (\Omega)) $, it suffices to apply the following lemma to equation (3.9). \end{remark}
\begin{lemma} The operator $\Upsilon$ transforms  $L_2(\Omega)\times H^1(0,T; H^{-1}(\Omega))$ into $H^1_0(\Omega)$. \end{lemma}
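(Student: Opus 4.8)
The plan is to prove the lemma by writing the weak solution explicitly in the Dirichlet eigenbasis and then estimating a Fourier series; the essential content is instantaneous parabolic smoothing, with the hypothesis $f\in H^1(0,T;H^{-1}(\Omega))$ (rather than merely $L_2(0,T;H^{-1}(\Omega))$) entering through an integration by parts in time. First I would fix the eigenfunctions $e_k\in H^1_0(\Omega)$ of $-\Delta$, namely $-\Delta e_k=\lambda_k e_k$ with $0<\lambda_1\le\lambda_2\le\cdots\to\infty$ (they exist because $\Omega$ is bounded, so $H^1_0(\Omega)\hookrightarrow L_2(\Omega)$ is compact, and $\lambda_1>0$ by (2.1)), orthonormal in $L_2(\Omega)$ and, after rescaling, in $H^1_0(\Omega)$; then for $g_k:=\langle g,e_k\rangle$ one has $\|g\|^2=\sum_k g_k^2$ ($g\in L_2$), $\|g\|_{-1}^2=\sum_k\lambda_k^{-1}g_k^2$ ($g\in H^{-1}$) and $\|w\|_1^2=\sum_k\lambda_k w_k^2$ ($w\in H^1_0$). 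Testing (3.21) against $e_k$ turns it into the scalar problem $\upsilon_k'+a\lambda_k\upsilon_k=f_k$, $\upsilon_k(0)=(\upsilon_0,e_k)$ for $\upsilon_k(t):=(\upsilon(t),e_k)$, whence $\upsilon_k(t)=(\upsilon_0,e_k)e^{-a\lambda_k t}+\int_0^t e^{-a\lambda_k(t-s)}f_k(s)\,ds$. Since $\upsilon(t)\in H^1_0(\Omega)$ is equivalent to $\sum_k\lambda_k\upsilon_k(t)^2<\infty$, the lemma reduces to bounding this sum for each fixed $t\in(0,T]$.

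The homogeneous part is immediate: $\sum_k\lambda_k(\upsilon_0,e_k)^2e^{-2a\lambda_k t}\le\big(\sup_{x\ge0}xe^{-2atx}\big)\|\upsilon_0\|^2=(2aet)^{-1}\|\upsilon_0\|^2$. The forcing part is the delicate one. A naive Cauchy--Schwarz bound only gives $\sum_k\lambda_k\big(\int_0^t e^{-a\lambda_k(t-s)}f_k\,ds\big)^2\le\frac1a\int_0^t\sum_k f_k(s)^2\,ds=\frac1a\int_0^t\|f(s)\|^2\,ds$, an $L_2(0,T;L_2)$ quantity that need not be finite for $f\in L_2(0,T;H^{-1})$ --- this is precisely what forces us to use one time derivative of $f$. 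So I would integrate by parts in $s$: $\int_0^t e^{-a\lambda_k(t-s)}f_k(s)\,ds=\frac1{a\lambda_k}\big(f_k(t)-e^{-a\lambda_k t}f_k(0)-\int_0^t e^{-a\lambda_k(t-s)}f_k'(s)\,ds\big)$, where $f_k'=\langle f',e_k\rangle$ and $f(0),f(t)$ make sense in $H^{-1}(\Omega)$ because $H^1(0,T;H^{-1})\hookrightarrow C([0,T];H^{-1})$. Multiplying by $\sqrt{\lambda_k}$, squaring (with $(p+q+r)^2\le3(p^2+q^2+r^2)$), estimating the last integral by Cauchy--Schwarz ($(\int_0^t e^{-a\lambda_k(t-s)}f_k'\,ds)^2\le\frac1{a\lambda_k}\int_0^t f_k'(s)^2\,ds$) and summing over $k$ with $\lambda_k^{-1}\le\lambda_1^{-1}$, I obtain $\sum_k\lambda_k\big(\int_0^t e^{-a\lambda_k(t-s)}f_k(s)\,ds\big)^2\le\frac3{a^2}\big(\|f(t)\|_{-1}^2+\|f(0)\|_{-1}^2+\frac1{a\lambda_1}\|f'\|_{L_2(0,T;H^{-1})}^2\big)<\infty$. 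Hence $\upsilon(t)=\Upsilon(\upsilon_0,f)\in H^1_0(\Omega)$ for every $t\in(0,T]$.

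The main obstacle is exactly the point flagged above: the solution kernel of (3.21)--(3.23), applied to $H^{-1}$-valued data, carries a borderline non-integrable singularity in time, so the proof \emph{must} trade it for regularity of $f$ via integration by parts --- plain $f\in L_2(0,T;H^{-1})$ would not suffice. A related (non-obstructing) feature: the resulting bound on $\|\upsilon(t)\|_1$ contains a factor $t^{-1/2}$ and blows up as $t\to0^+$, so the statement is genuinely about $t>0$ and $\Upsilon$ cannot be expected to be continuous into $H^1_0(\Omega)$ up to $t=0$. Two routine matters remain: the series solution coincides with the weak solution $\upsilon\in W(\Omega,T)$ by the uniqueness already established, and a merely bounded open $\Omega$ is enough for the eigenbasis (compactness of the embedding $H^1_0(\Omega)\hookrightarrow L_2(\Omega)$). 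Equivalently, the whole argument can be run with the analytic semigroup $e^{ta\Delta}$: the smoothing estimate $\|(-\Delta)^{1/2}e^{ta\Delta}\|\le(2aet)^{-1/2}$ handles the initial datum, and the identity $\frac{d}{ds}\big(\Delta^{-1}e^{(t-s)a\Delta}\big)=-a\,e^{(t-s)a\Delta}$ performs the integration by parts inside the Duhamel integral $\int_0^t e^{(t-s)a\Delta}f(s)\,ds$.
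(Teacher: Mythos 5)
Your proof is correct, and it isolates exactly the two ingredients the paper also uses: parabolic smoothing of the homogeneous semigroup to handle the $L_2$ initial datum, and an integration by parts in time that converts the borderline singularity of the Duhamel kernel acting on $H^{-1}$-valued data into the hypothesis $f'\in L_2(0,T;H^{-1}(\Omega))$. The execution, however, is genuinely different. You diagonalize in the Dirichlet eigenbasis and estimate a Fourier series mode by mode; the paper stays coordinate-free: it splits $\upsilon=\upsilon_1+\upsilon_2$ with the initial data shifted by $\Delta^{-1}f(0)$, so that $\upsilon_3:=\upsilon_1'$ solves the same heat equation with forcing $f'$ and \emph{zero} initial value, whence $\upsilon_1(t)=-\Delta^{-1}f(0)+\int_0^t\upsilon_3(s)\,ds$ lies in $H^1(0,T;H^1_0(\Omega))\subset C([0,T];H^1_0(\Omega))$ because $\upsilon_3\in W(\Omega,T)\subset L_2(0,T;H^1_0(\Omega))$; the homogeneous part $\upsilon_2$ is then dispatched by citing the smoothing property of the heat semigroup. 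Your identity $\int_0^te^{-a\lambda_k(t-s)}f_k\,ds=\frac1{a\lambda_k}\bigl(f_k(t)-e^{-a\lambda_k t}f_k(0)-\int_0^te^{-a\lambda_k(t-s)}f_k'\,ds\bigr)$ is precisely the spectral shadow of this time-differentiation trick. What your version buys is an explicit quantitative bound, including the $(2aet)^{-1/2}$ blow-up rate of $\|\upsilon(t)\|_1$ as $t\to0^+$, and independence from the external reference for homogeneous smoothing; what the paper's version buys is brevity, no recourse to the spectral theorem, and the sharper observation that the inhomogeneous part is in fact continuous into $H^1_0(\Omega)$ down to $t=0$, so only the free evolution of $\upsilon_0+\Delta^{-1}f(0)$ is responsible for the loss of uniformity near $t=0$. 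Two cosmetic points: the $e_k$ cannot be simultaneously orthonormal in $L_2(\Omega)$ and in $H^1_0(\Omega)$ (your computations consistently use the $L_2$-normalization, so nothing is affected), and your Cauchy--Schwarz constant $\frac1{a\lambda_k}$ for the $f_k'$ integral can be sharpened to $\frac1{2a\lambda_k}$, which of course does not matter.
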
  

\begin{proof} W.l.o.g. $a=1$. The solution $\upsilon$ can be considered as the sum $\upsilon_1 +\upsilon_2$ of the weak solutions  to the following problems: \be \fr {\upsilon_1} t - \Delta \upsilon_1 =f,\ee \be \upsilon_1|_{\partial\Omega}= 0, \ee
\be \upsilon_1(0)=-\Delta^{-1}f(0), \ee \be \fr {\upsilon_2} t - \Delta \upsilon_2 =0,\ee \be \upsilon_2|_{\partial\Omega}= 0, \ee
\be \upsilon_2(0)=\upsilon_0+\Delta^{-1}f(0). \ee

It is easy to see that $\upsilon_1(t)=-\Delta^{-1}f(0)+\int\limits_0^t \upsilon_3(s)\, ds$, where $\upsilon_3\in W(\Omega, T)$ is determined by the problem \be \upsilon_3 ' - \Delta \upsilon_3 =f',\ee \be \upsilon_3|_{\partial\Omega}= 0, \ee
\be \upsilon_3(0)=0. \ee

Obviously, $\upsilon_3 \in L_2(0,T; H^{1}_0(\Omega))$, so $\upsilon_1\in H^1(0,T; H^{1}_0(\Omega))\subset C([0,T]; H^{1}_0(\Omega))$. Moreover, see e.g. \cite[Proposition XV.3.5]{cvbook}, $\upsilon_2(t)\in H^{1}_0(\Omega)$.\end{proof}

\begin{remark} There is no essential loss
in generality of the model in assuming that $ \beta_0(u,\sigma)$ is equal to some constant $\beta_\infty$ for large $|u|+|\sigma|$. In fact, physically, $ |u(t,x)|\leq 1$
($u$ is the concentration, so it cannot exceed $100\%$). Let $\varsigma= \sigma -\nu u$. Then (3.2) yields (cf. \cite{var,iter})
$$ \varsigma(t,x)=\varsigma(0,x)\exp\left(-\int\limits_0^t \beta_0(u(\xi,x),\nu u(\xi,x)+\varsigma(\xi,x))\, d \xi\right)$$ $$+\int\limits_{0}^t
\exp\left(\int\limits_t^s \beta_0(u(\xi,x),\nu u(\xi,x)+\varsigma(\xi,x))\, d \xi\right)$$ $$\times \left[\mu -\nu  \beta_0(u(s,x),\nu u(s,x)+\varsigma(s,x))\right]u(s,x)
\, ds.$$ Hence,
\be |\varsigma(t,x)|\leq e^{-t\beta_G}|\varsigma(0,x)|+(\mu+\nu\beta_R)\int\limits_{0}^t
e^{(s-t)\beta_G}
\, ds\leq e^{-t\beta_G}|\varsigma(0,x)|+\frac{\mu+\nu\beta_R}{\beta_G}.\ee
Thus, if $|\varsigma(0,x)|$ is uniformly bounded, $\varsigma$ is also bounded:
\be |\varsigma(t,x)|\leq C, \forall t>0, x\in \Omega.\ee Moreover, (3.33) implies that long-time behaviour of $\varsigma(t,x)$ is bounded  for any $|\varsigma(0,x)|$. Hence, $\varsigma$ (and, therefore, $\sigma$) is bounded for typical regimes which may be observed in reality. Thus, the relaxation time (and its inverse $ \beta_0$) can be experimentally determined\footnote{e.g. in form (1.4).} only for bounded $u$ and $\sigma$, whereas "at infinity" we can choose it at discretion, for instance, we can let $ \beta_0(u,\sigma)\equiv \beta_\infty$ for large $|u|+|\sigma|$.

\end{remark}

\section {Semigroups, semiflows, trajectory attractors and global attractors}

Let us recall some basics of the attractor theory. Let $E$ be a metric space.

\begin{definition} A family of mappings $\mathcal{S}_t:E\to E$, $t\geq 0$, is called a
\textit{semigroup}  if $\mathcal{S}_0$ is the identity map $I$ and
\begin{equation}\mathcal{S}_t\circ \mathcal{S}_s=\mathcal{S}_{t+s}\end{equation} for any
$t,s\geq 0$.
\end{definition}

\begin{definition} A set $P\subset $ $E$ is called \textit{attracting} (for $\mathcal{S}_t$) if for any bounded set
$B\subset E$ and any open neighborhood $W$ of $P$ there exists $h
\geq 0$ such that $\mathcal{S}_t B\subset W$ for all $t\geq h$.
\end{definition} \begin{definition} 
 A set $P\subset $ $E$ is called \textit{absorbing}  (for $\mathcal{S}_t$) if for any bounded set $B\subset E$ there is $h \geq 0 $ such that for all $t \geq h $ one
has $ \mathcal{S}_t B\subset P. $ \end{definition}

\begin{definition} 
 A set $A\subset $ $E$ is called \textit{invariant} (for $\mathcal{S}_t$) if

$$\mathcal{S}_t A= A $$ for any $t \geq 0
$.\end{definition}

\begin{definition} A set $\mathcal{A}\subset E$ is called a \textit{global attractor} (of
$\mathcal{S}_t$) if

i) $\mathcal{A} $ is compact;

ii) $\mathcal{A} $ is invariant for $\mathcal{S}_t$;

iii) $\mathcal{A} $ is attracting for
$\mathcal{S}_t$.
\end{definition}

If there exists a global attractor of
$\mathcal{S}_t$, then it is unique (see e.g. \cite[Corollary 4.1.1]{gruy}).

\begin{definition} A semigroup $\mathcal{S}_t:E\to E$ is called
\textit{dissipative}  if there is a bounded absorbing set.
\end{definition}

\begin{definition} A semigroup $\mathcal{S}_t:E\to E$ is called
\textit{asymptotically compact}  if, for any bounded sequence $y_m \in E$ and any sequence of numbers $t_m\to$ $+\infty$, the sequence $\mathcal{S}_{t_m}(y_m)$ contains a converging subsequence.
\end{definition}

\begin{definition} A semigroup $\mathcal{S}_t:E\to E$ is called a
\textit{semiflow}  if the map $$(t, y)\mapsto \mathcal{S}_t(y)$$ is continuous from $[0,+\infty)\times E$ to $E$.
\end{definition}

The next result follows e.g. from \cite[Theorem 3.3, Corollary 4.3]{ball}:
\begin {theorem}
A semiflow $\mathcal{S}_t:E\to E$ has a global attractor $ \mathcal {A}\subset E $ if and only if it is dissipative and asymptotically compact. If $E$ is connected, then $ \mathcal {A}$ is a connected set. \end {theorem}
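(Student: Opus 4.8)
\emph{Proof plan.} This is the specialization to single-valued semiflows of the abstract existence theorem for attractors, and it can be read off from \cite[Theorem 3.3, Corollary 4.3]{ball}; the continuity assumption of Definition 4.8 removes the closedness axioms required for generalized semiflows, so the argument simplifies. For necessity, assume a global attractor $\mathcal{A}$ exists. Being compact it is bounded, so the closed neighbourhood $W=\{y\in E:\mathrm{dist}(y,\mathcal{A})\le 1\}$ is bounded; applying the attraction property of Definition 4.2 to the open set $\{\mathrm{dist}(\cdot,\mathcal{A})<1\}$ shows $W$ is absorbing, whence dissipativity. For asymptotic compactness, if $y_m$ is bounded and $t_m\to+\infty$, then attraction of $\{y_m:m\in\mathbb{N}\}$ gives $\mathrm{dist}(\mathcal{S}_{t_m}y_m,\mathcal{A})\to0$, and compactness of $\mathcal{A}$ provides a subsequence of $\mathcal{S}_{t_m}y_m$ converging to a point of $\mathcal{A}$.

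\emph{Sufficiency.} Assume dissipativity and asymptotic compactness, fix a bounded absorbing set $B_0$, and put $\mathcal{A}=\omega(B_0)=\bigcap_{s\ge0}\overline{\bigcup_{t\ge s}\mathcal{S}_tB_0}$, i.e.\ the set of all limits of sequences $\mathcal{S}_{t_m}y_m$ with $y_m\in B_0$, $t_m\to+\infty$. Asymptotic compactness makes $\mathcal{A}$ nonempty, and a diagonal extraction makes it (sequentially) compact; a short contradiction argument shows $\mathcal{A}$ attracts $B_0$, and since $B_0$ absorbs every bounded set, $\mathcal{A}$ attracts all bounded sets. Invariance uses continuity of the maps $\mathcal{S}_t$: if $z=\lim_m\mathcal{S}_{t_m}y_m\in\mathcal{A}$ then $\mathcal{S}_tz=\lim_m\mathcal{S}_{t+t_m}y_m\in\mathcal{A}$, giving $\mathcal{S}_t\mathcal{A}\subseteq\mathcal{A}$; and writing $\mathcal{S}_{t_m}y_m=\mathcal{S}_t(\mathcal{S}_{t_m-t}y_m)$ for $t_m>t$, asymptotic compactness produces a subsequential limit $w\in\mathcal{A}$ of $\mathcal{S}_{t_m-t}y_m$, whence $z=\mathcal{S}_tw\in\mathcal{S}_t\mathcal{A}$ by continuity of $\mathcal{S}_t$. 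Thus $\mathcal{S}_t\mathcal{A}=\mathcal{A}$ and $\mathcal{A}$ is a global attractor.

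\emph{Connectedness.} Suppose $E$ is connected but $\mathcal{A}=P_1\sqcup P_2$ with $P_1,P_2$ nonempty compact and $\mathrm{dist}(P_1,P_2)=3\delta>0$. Set $U_i=\{y:\mathrm{dist}(y,P_i)<\delta\}$ and $U=U_1\cup U_2\supseteq\mathcal{A}$. For $x\in E$ the set $\{x\}$ is bounded, so $\mathcal{S}_tx\in U$ for $t\ge T_x$; as $t\mapsto\mathcal{S}_tx$ maps the connected set $[T_x,+\infty)$ continuously into the disjoint open sets $U_1,U_2$, the orbit is eventually contained in exactly one $U_i$, giving a well-defined index $\iota(x)\in\{1,2\}$. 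Then $E=\iota^{-1}(1)\sqcup\iota^{-1}(2)$; both pieces are nonempty, since $\iota\equiv i$ on $P_i$ by invariance of $\mathcal{A}$; and both are open, since for $x$ with $\iota(x)=i$ one chooses a bounded neighbourhood $V$ of $x$ and, by the uniform attraction of $V$, a time $T\ge T_x$ with $\mathcal{S}_tV\subseteq U$ for all $t\ge T$, then shrinks $V$ using continuity of $\mathcal{S}_T$ so that $\mathcal{S}_TV\subseteq U_i$ as well; for $x'\in V$ the orbit then lies in $U$ for all $t\ge T$ and in $U_i$ at $t=T$, hence $\iota(x')=i$. This contradicts connectedness of $E$.

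\emph{Main obstacle.} The delicate points are the backward invariance $\mathcal{A}\subseteq\mathcal{S}_t\mathcal{A}$ and, above all, the openness of $\iota^{-1}(i)$ in the connectedness argument: both require asymptotic compactness, respectively the \emph{uniform} attraction of bounded neighbourhoods, to be combined with continuity of the individual maps $\mathcal{S}_t$ in exactly the right order — neither property by itself is enough.
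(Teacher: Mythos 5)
Your proof is correct. The paper itself offers no argument for this theorem — it simply defers to \cite[Theorem 3.3, Corollary 4.3]{ball} — and what you have written is precisely the standard $\omega$-limit-set construction (with the necessity direction and the connectedness argument via the locally constant index $\iota$) that underlies that citation, correctly specialized from Ball's generalized semiflows to the single-valued continuous case.
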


In order to describe the dynamics of weak solutions for problem (3.9)-(3.11), one may put $E=L_2(\Omega) \times L_2^1(\Omega)$, and define the semigroup $S_t:E\to E$ in the standard way: if $y=(v_0,\tau_0)$, and $(v,\tau)$ is the corresponding weak solution of (3.9)-(3.12), then we set $S_t(y)=(v(t),\tau(t))$. Since weak solutions  belong  to  $C([0,T]; L_2(\Omega)) \times C([0,T]; H^1(\Omega))$ for all $T>0$, the map $t\mapsto S_t(y)$ is continuous for each $y\in E$. On the other hand, a reasoning similar to the proof of uniqueness in Theorem 3.2 shows that the map $S_t:E\to E$ is continuous uniformly with respect to $t\in [0,T]$ for all $T>0$.  Then the map $(t, y)\mapsto S_t(y)$ is continuous, and, taking into account Lemma 5.1 (see below), we arrive at

\begin{theorem} $S_t:E\to E$ is a dissipative semiflow. \end{theorem}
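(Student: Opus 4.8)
The plan is to check two things: that $S_t$ is a well-defined semiflow on $E$, and that it has a bounded absorbing set; the first part is largely assembled from the discussion preceding the statement. By Theorem 3.2 the operator $S_t$ is well defined, and $S_t(y)\in E$ for each $y\in E$ and $t\ge0$, since $v(t)\in L_2(\Omega)$ by the embedding $W(\Omega,T)\subset C([0,T];L_2(\Omega))$ and $\tau(t)\in H^1_0(\Omega)$ by the very definition of the solution class (3.13), which forces $\tau\in C([0,+\infty);H^1_0(\Omega))$. The semigroup identity $S_{t+s}=S_t\circ S_s$ follows from the autonomy of (3.9)--(3.11) together with uniqueness: for fixed $s\ge0$ the pair $t\mapsto(v(s+t),\tau(s+t))$ is again a weak solution, now with initial value $S_s(y)\in E$, hence it coincides with $S_t(S_s(y))$. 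Continuity of $t\mapsto S_t(y)$ is immediate from $v\in C([0,T];L_2(\Omega))$ and $\tau\in C([0,T];H^1_0(\Omega))\subset C([0,T];L_2(\Omega))$. Continuity of $y\mapsto S_t(y)$, uniform on $[0,T]$, is obtained by repeating the uniqueness computation of Theorem 3.2 but retaining the initial differences: for data $y_1,y_2$, with $w=v_1-v_2$ and $\xi=\varpi_1-\varpi_2$, integrating the differential inequality (3.19) in time and applying Gronwall's lemma gives $\|w(t)\|^2+\|\xi(t)\|^2\le Ce^{Ct}\|y_1-y_2\|_E^2$ on $[0,T]$, and since $\tau_i=\varpi_i-\nu v_i$ this yields $\|S_ty_1-S_ty_2\|_E\le C(T)\|y_1-y_2\|_E$. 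Joint continuity of $(t,y)\mapsto S_t(y)$, hence the semiflow property, then follows in the standard way.

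The substantive point is dissipativity, which amounts to a uniform eventual bound on $\|v(t)\|^2+\|\tau(t)\|^2$. I would work in the variables $(v,\varpi)$, which satisfy (3.6)--(3.8), and build a combined energy identity: take the $H^{-1}(\Omega)$-scalar product of (3.6) with $\mu v+\nu v'$, take the bra-ket pairing of (3.7) with $E\varpi$ (using (2.3) and (2.5) exactly as in the uniqueness proof), and add. The cross terms $\pm E\mu(v,\varpi)$ and $\pm E\nu\langle v',\varpi\rangle$ cancel, and, since $\beta\ge\beta_G$, one is left with
\be \frac{d}{dt}\Big(\tfrac\mu2\|v\|_{-1}^2+\tfrac{D\nu}2\|v\|^2+\tfrac E2\|\varpi\|^2\Big)+\nu\|v'\|_{-1}^2+\mu D\|v\|^2+E\beta_G\|\varpi\|^2\le \mu(h,v)_{-1}+\nu(h,v')_{-1}+E(g,\varpi).\ee
Now $h\in L_2(\Omega)$ and $g(\cdot,v,\varpi)$ is bounded in $L_2(\Omega)$ uniformly in $v,\varpi$ (because $\beta_0$, $\varphi$ and $\phi$ are bounded), so, using (2.7), (2.1) and Young's inequality, the right-hand side is absorbed, up to an additive constant, into one half of the three dissipative terms on the left. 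Abbreviating $Y=\tfrac\mu2\|v\|_{-1}^2+\tfrac{D\nu}2\|v\|^2+\tfrac E2\|\varpi\|^2$ and using $\|v\|_{-1}\le C\|v\|$ once more to bound $Y\le\kappa(\|v\|^2+\|\varpi\|^2)$, this becomes a differential inequality $Y'+\alpha Y\le C$ with $\alpha,C>0$ depending only on the data; Gronwall's lemma gives $Y(t)\le Y(0)e^{-\alpha t}+C/\alpha$. Hence $\|v(t)\|^2+\|\varpi(t)\|^2$, and therefore $\|v(t)\|^2+\|\tau(t)\|^2$ (recall $\tau=\varpi-\nu v$), fall below an absolute constant once $t$ exceeds a time depending only on $\|y\|_E$, so the corresponding ball of $E$ is absorbing and $S_t$ is dissipative. (In the Maxwell case $\mu=0$ this estimate controls only $\varpi$; one then complements it with the pointwise bound (3.33) on $\varsigma=\sigma-\nu u$ from Remark 3.6 to recover the bound on $v$.)

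I expect the dissipativity estimate to be the only real obstacle: the two equations must be tested against carefully matched multipliers so that the cross terms cancel and the resulting differential inequality is genuinely coercive — a plain $L_2$-energy estimate for (3.9) alone would fail, since it leaves an uncontrolled $\|\nabla\tau\|^2$ term on the right. By contrast, the functional-analytic technicalities (the integration-by-parts identities, the pairing conventions, the Gronwall step) are routine and already appear in the proof of Theorem 3.2. Finally, I would stress that only dissipativity is claimed here; asymptotic compactness of $S_t$ is not available, and $E$ is not complete, so the abstract criterion recalled above does not apply, which is exactly why one then passes to trajectory attractors.
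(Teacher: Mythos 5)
Your proof is correct and follows essentially the same route as the paper: the semiflow structure comes from continuity of $t\mapsto S_t(y)$ together with a Gronwall estimate modelled on the uniqueness computation of Theorem 3.2, and dissipativity is exactly the paper's Lemma 5.1 — testing (3.6) with $\mu v+\nu v'$ in $H^{-1}(\Omega)$ and (3.7) with $E\varpi$, cancelling the cross terms, and running Gronwall on the same energy $\tfrac\mu2\|v\|_{-1}^2+\tfrac{\nu D}2\|v\|^2+\tfrac E2\|\varpi\|^2$. The only inaccuracy is the closing parenthetical on the Maxwell case $\mu=0$: the bound (3.33) controls $\varsigma=\sigma-\nu u$, not $u$, so it would not recover control of $v$ — but that case lies outside the stated hypotheses ($\mu>0$) and does not affect your argument.
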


However, it seems to be very hard or impossible to establish asymptotic compactness of this semiflow since the space $E$ is not complete (and this is usually important, cf. \cite{moi}). If we try to change the phase space $E$ and to find so-called $(E,F)$-attractors \cite{vib,gruy}, i.e. attractors which attract in the topology of some space $F$, which is weaker than the one of $E$, then we need continuity of $S_t$ in this weaker topology, which is not clear. Moreover, it seems (due to the properties of the generalized attractor which we construct below) that the attractor can contain non-differentiable elements of $L_2(\Omega)$, and then Remark 3.4 implies that it cannot be invariant. Thus, we are going to use the theory of trajectory attractors, so let us briefly describe the required notions.

Let $E $ and $E_0 $ be Banach spaces, $E\subset E_0 $, $E $ is reflexive.
Fix some set
$$\mathcal {H} ^ + \subset C ([0, + \infty); E_0) \cap L_\infty (0, + \infty; E) $$
of solutions (strong, weak, etc.) for any given autonomous differential equation or boundary value problem. Hereafter, the set $ \mathcal {H} ^ + $ will be
called the \textit{trajectory space} and its elements will be
called  \textit{trajectories}. Generally speaking, the nature of $ \mathcal {H} ^ + $ may be
different from the just described one.

\begin{definition} A set $P\subset $ $C ([0, + \infty ); E_0) \cap L_\infty (0,
+ \infty; E) $ is called \textit{attracting} (for the trajectory
space $ \mathcal {H} ^ + $) if for any set $B\subset \mathcal {H} ^ + $ which is bounded in $ L_\infty (0, +
\infty; E) $, one has
$$\sup\limits _ {u\in B} ^ {} \inf\limits _ {v\in P} ^ {} \|T (h) u-v \| _{C ([0, +\infty);E_0)} \underset {h\to\infty} {\to} 0. $$
\end{definition}
Here $T(h)$ stands for the  translation (shift) operators,
$$ T (h) (u) (t) =u (t+h). $$ Note that $T(h)$ is a semiflow on $C ([0, + \infty ); E_0)$.

\begin{definition} 
 A set $P\subset $ $C ([0, + \infty ); E_0) \cap L_\infty (0, +
\infty; E) $ is called \textit{absorbing} (for the trajectory space
$ \mathcal {H} ^ + $) if for any set $B\subset \mathcal {H} ^ + $ which is bounded in $ L_\infty (0, +
\infty; E) $,  there is $h \geq 0 $
such that for all $t \geq h $:
$$ T (t) B\subset P. $$ \end{definition}

\begin{definition} A set $ \mathcal {U}\subset $ $C ([0, + \infty ); E_0) \cap L_\infty (0,
+ \infty; E) $ is called the \textit{minimal trajectory attractor} (for
the trajectory space $ \mathcal {H} ^ + $) if

i) $ \mathcal {U} $ is compact in $C ([0, + \infty); E_0) $ and bounded in
$L_\infty (0, + \infty; E) $;

ii) $T (t) \mathcal {U}=  \mathcal {U}$ for any $t \geq 0 $;

iii) $ \mathcal {U} $ is attracting in the sense of Definition 4.11;

iv) $ \mathcal {U} $ is contained in any other set satisfying conditions i), ii), iii).
\end{definition}

\begin{definition} A set $ \mathcal {A} \subset E $ is called the \textit{global
attractor} (in $E_0 $) for the trajectory space $ \mathcal {H}
^ + $ if

i) $ \mathcal {A} $ is compact in $E_0 $ and bounded in $E $;

ii) for any bounded in $ L_\infty (0, + \infty; E) $ set $B\subset
\mathcal {H} ^ + $ the attraction property is fulfilled:
$$\sup\limits _ {u\in B} ^ {} \inf\limits _ {v\in \mathcal {A}} ^ {} \|u (t)-v \| _ {E_0} \underset {t\to\infty} {\to} 0 $$

iii) $ \mathcal {A} $ is the minimal set satisfying conditions i)
and ii) (that is, $ \mathcal {A} $ is contained in every set
satisfying conditions i) and ii)). \end{definition}

\begin{theorem}(see \cite[Corollary 4.2.1, Lemma 4.2.9]{gruy}) Assume that there exists an absorbing set $P $ for
the trajectory space $ \mathcal {H} ^ + $, which is relatively compact in $C ([0, + \infty); E_0) $
and bounded in $L_\infty (0, + \infty; E) $. Then there exists a
minimal trajectory attractor $ \mathcal {U} $ for the trajectory
space $ \mathcal {H} ^ + $. \end{theorem}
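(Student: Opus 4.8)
This is the abstract existence statement of \cite[Corollary 4.2.1, Lemma 4.2.9]{gruy}, so strictly speaking nothing is to be proved here; but let me indicate how one would argue. The plan is to realise $\mathcal{U}$ as an $\omega$-limit set of the absorbing set $P$ under the translation semiflow $T(h)$ on $C([0,+\infty);E_0)$, in the spirit of the Chepyzhov--Vishik construction, but \emph{without} assuming that $\mathcal{H}^+$ is invariant under time shifts (only the absorbing property is available).

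First I would replace $P$ by its closure $\overline{P}$ in $C([0,+\infty);E_0)$. By relative compactness, $\overline{P}$ is compact in this Fr\'echet space, it is still absorbing for $\mathcal{H}^+$, and it remains bounded in $L_\infty(0,+\infty;E)$ --- this last point being where reflexivity of $E$ enters: if $u_m\to u$ in $C([0,+\infty);E_0)$ with $\|u_m\|_{L_\infty(0,+\infty;E)}\le R$, then for a.a.\ $t$ the sequence $u_m(t)$ is bounded in $E$, hence weakly precompact, and its weak limit must coincide with the $E_0$-limit $u(t)$, so $\|u(t)\|_E\le\liminf_m\|u_m(t)\|_E\le R$. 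Then I would define $\mathcal{U}$ to be the set of all $\gamma\in C([0,+\infty);E_0)$ that arise as limits, in $C([0,+\infty);E_0)$, of sequences $T(h_m)u_m$ with $u_m\in\mathcal{H}^+$ forming an $L_\infty(0,+\infty;E)$-bounded family and $h_m\to+\infty$. For any such family the absorbing property yields $H\ge0$ with $T(s)u_m\in P$ for all $s\ge H$ and all $m$, so the tail of each sequence $T(h_m)u_m$ lies in the compact set $\overline{P}$; this makes $\mathcal{U}$ nonempty, forces $\mathcal{U}\subset\overline{P}$, and, after a routine diagonal argument for closedness, gives property (i).

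Property (ii), $T(t)\mathcal{U}=\mathcal{U}$, splits into the easy inclusion $T(t)\gamma=\lim_m T(h_m+t)u_m\in\mathcal{U}$, and the reverse one: given $\gamma=\lim_m T(h_m)u_m$ and $t\ge0$, for $m$ large $h_m-t\ge H$, so $T(h_m-t)u_m\in\overline{P}$; a subsequential limit $\delta\in\mathcal{U}$ then satisfies $T(t)\delta=\gamma$ by continuity of $T(t)$. Property (iii) I would prove by contradiction: a family $B\subset\mathcal{H}^+$ bounded in $L_\infty(0,+\infty;E)$, together with $u_m\in B$, $h_m\to\infty$, $\mathrm{dist}_{C([0,+\infty);E_0)}(T(h_m)u_m,\mathcal{U})\ge\varepsilon$, would still have $T(h_m)u_m$ eventually in the compact $\overline{P}$, hence a subsequence converging to a point of $\mathcal{U}$ by the very definition of $\mathcal{U}$ --- a contradiction. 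For minimality (iv): if $\mathcal{U}'$ also satisfies (i)--(iii), it is closed in $C([0,+\infty);E_0)$, and for $\gamma=\lim_m T(h_m)u_m\in\mathcal{U}$ the attraction property of $\mathcal{U}'$ applied to the bounded family $B=\{u_m\}$ forces $\mathrm{dist}(T(h_m)u_m,\mathcal{U}')\to0$, whence $\gamma\in\mathcal{U}'$; so $\mathcal{U}\subset\mathcal{U}'$.

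The genuinely delicate points are only two: keeping the $L_\infty(0,+\infty;E)$-bound alive under the passage to the closure and under the $\omega$-limit construction (handled through reflexivity of $E$ and weak lower semicontinuity of the norm), and the inclusion $\mathcal{U}\subset T(t)\mathcal{U}$, which hinges on the absorbing property to guarantee that the pre-shifted trajectories $T(h_m-t)u_m$ are themselves eventually trapped in $\overline{P}$. Everything else is soft point-set topology plus the semigroup identity for $T(h)$.
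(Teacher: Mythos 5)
The paper itself offers no proof of this statement: it is imported verbatim from \cite[Corollary 4.2.1, Lemma 4.2.9]{gruy}, so there is no internal argument to compare yours against line by line. Your $\omega$-limit construction under the translation semiflow is the standard route to this result and is essentially sound: the reduction to the compact set $\overline{P}$ via absorption, the use of reflexivity of $E$ to keep the $L_\infty(0,+\infty;E)$ bound alive under $C([0,+\infty);E_0)$-limits, the two inclusions in $T(t)\mathcal{U}=\mathcal{U}$, the attraction property by contradiction, and the minimality argument are all correct as sketched.

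The one step that does not work as stated is the ``routine diagonal argument for closedness'' of $\mathcal{U}$. With your definition, membership $\gamma\in\mathcal{U}$ requires exhibiting a \emph{single} $L_\infty(0,+\infty;E)$-bounded family $\{u_m\}\subset\mathcal{H}^+$ with $T(h_m)u_m\to\gamma$ and $h_m\to+\infty$. If $\gamma_k\to\gamma$ with $\gamma_k=\lim_m T(h_m^k)u_m^k$, the diagonal sequence $u_{m_k}^k$ need not be bounded in $L_\infty(0,+\infty;E)$: each family $\{u_m^k\}_m$ is bounded by some constant $R_k$, but nothing prevents $R_k\to\infty$, so the limit $\gamma$ has no admissible representing family and closedness of $\mathcal{U}$ does not follow. (Note that absorption only controls the shifted trajectories $T(h_m^k)u_m^k$, not the $u_m^k$ themselves.) The repair is cheap: replace $\mathcal{U}$ by its closure $\overline{\mathcal{U}}$ in $C([0,+\infty);E_0)$. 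Then $\overline{\mathcal{U}}\subset\overline{P}$ is compact and, by your reflexivity argument, bounded in $L_\infty(0,+\infty;E)$; it is attracting because it contains the attracting set $\mathcal{U}$; the invariance $T(t)\overline{\mathcal{U}}=\overline{\mathcal{U}}$ follows from $T(t)\mathcal{U}=\mathcal{U}$ together with continuity of $T(t)$ and compactness of $\overline{P}$ (given $\gamma=\lim\gamma_k$ with $\gamma_k=T(t)\delta_k$, $\delta_k\in\mathcal{U}$, pass to a convergent subsequence of the $\delta_k$); and minimality is inherited because any competitor $\mathcal{U}'$ is compact, hence closed, and already contains $\mathcal{U}$ by your argument. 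With this one modification your plan is a complete proof.
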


The structure of minimal trajectory attractors is discussed in \cite[Chapter 4]{gruy} (see also \cite{cvbook}). In particular, the minimal trajectory attractor contains the set of those solutions to the considered problem that can be continued to the whole
real axis being uniformly bounded in $E $ and continuous with
values in $E_0 $; however, some extra elements may appear. The structure of global attractors is determined by the structure of minimal trajectory attractors:

\begin {theorem} (see \cite[Theorem 4.2.2]{gruy}) If there exists a minimal
trajectory attractor $ \mathcal {U} $ for the trajectory space $
\mathcal {H} ^ + $, then there is a global attractor $ \mathcal
{A} $ for the trajectory space $ \mathcal {H} ^ + $, and for all $t\geq 0 $ one has $ \mathcal {A} =\{y (t) |y\in\mathcal {U} \}. $
\end {theorem}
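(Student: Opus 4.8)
The plan is to take as the global attractor the fiber of $\mathcal{U}$ over a fixed time, i.e. to set $\mathcal{A}:=\{y(0)\mid y\in\mathcal{U}\}$, and to verify the three conditions of Definition 4.14. First I would note that the asserted formula $\mathcal{A}=\{y(t)\mid y\in\mathcal{U}\}$ then holds for every $t\geq 0$: by the invariance $T(t)\mathcal{U}=\mathcal{U}$ from Definition 4.13(ii), $\{y(t)\mid y\in\mathcal{U}\}=\{(T(t)y)(0)\mid y\in\mathcal{U}\}=\{z(0)\mid z\in T(t)\mathcal{U}\}=\{z(0)\mid z\in\mathcal{U}\}=\mathcal{A}$. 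So it suffices to show this $\mathcal{A}$ is the global attractor for $\mathcal{H}^+$.

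For condition (i): compactness in $E_0$ holds because the evaluation map $y\mapsto y(0)$ is continuous from $C([0,+\infty);E_0)$ to $E_0$ and $\mathcal{U}$ is compact in $C([0,+\infty);E_0)$, so $\mathcal{A}$ is its continuous image. Boundedness in $E$: if $\|y\|_{L_\infty(0,+\infty;E)}\leq R$ for all $y\in\mathcal{U}$, then for $y\in\mathcal{U}$ one picks $t_k\to 0^+$ with $\|y(t_k)\|_E\leq R$; reflexivity of $E$ yields a weakly convergent subsequence in $E$, its limit has $E$-norm $\leq R$, and since $E\subset E_0$ continuously and $y(t_k)\to y(0)$ in $E_0$, the limit is $y(0)$, so $\|y(0)\|_E\leq R$. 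For condition (ii): let $B\subset\mathcal{H}^+$ be bounded in $L_\infty(0,+\infty;E)$. The trajectory attraction of $\mathcal{U}$ (Definition 4.11) gives $\sup_{u\in B}\inf_{w\in\mathcal{U}}\|T(h)u-w\|_{C([0,+\infty);E_0)}\to 0$ as $h\to\infty$; evaluating at $t=0$, and using $w(0)\in\mathcal{A}$, $\|u(h)-w(0)\|_{E_0}\leq\|T(h)u-w\|_{C([0,+\infty);E_0)}$, we get $\sup_{u\in B}\inf_{v\in\mathcal{A}}\|u(h)-v\|_{E_0}\to 0$, which is exactly the attraction property required.

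The delicate point is condition (iii), minimality. Let $\mathcal{A}'$ be any set compact in $E_0$, bounded in $E$, and satisfying the attraction property (ii); I must show $\mathcal{A}\subseteq\mathcal{A}'$. The idea is to pull $\mathcal{A}'$ back to the trajectory level. Using the attraction property of $\mathcal{U}$ together with its minimality clause Definition 4.13(iv) — equivalently, the description of $\mathcal{U}$ as the $\omega$-limit set, in $C([0,+\infty);E_0)$, of a trajectory absorbing set drawn from $\mathcal{H}^+$ (cf. Theorem 4.15) — every $y\in\mathcal{U}$ can be written as a $C([0,+\infty);E_0)$-limit $y=\lim_k T(h_k)u_k$ with $h_k\to\infty$ and $\{u_k\}\subset\mathcal{H}^+$ bounded in $L_\infty(0,+\infty;E)$. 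Evaluating at $t=0$ gives $y(0)=\lim_k u_k(h_k)$ in $E_0$. Applying the attraction property of $\mathcal{A}'$ to the bounded set $\{u_k\}$ yields $\inf_{v\in\mathcal{A}'}\|u_k(h_k)-v\|_{E_0}\to 0$, so $y(0)$ lies in the $E_0$-closure of $\mathcal{A}'$, which equals $\mathcal{A}'$ since $\mathcal{A}'$ is $E_0$-compact. Hence $\mathcal{A}=\{y(0)\mid y\in\mathcal{U}\}\subseteq\mathcal{A}'$, which is minimality.

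I expect this last step — reconstructing, from the abstract attraction of $\mathcal{U}$, an approximation of its elements by genuine trajectories of $\mathcal{H}^+$ that are uniformly bounded in $L_\infty(0,+\infty;E)$ — to be the main obstacle; it is where the concrete structure of the minimal trajectory attractor (or an equivalent $\omega$-limit characterization) has to be invoked rather than the bare Definition 4.13. Everything else is routine: continuity of the evaluation map, the reflexivity argument propagating the $E$-bound to the fiber, and the essentially trivial implication that $C([0,+\infty);E_0)$-attraction of $\mathcal{U}$ forces fiberwise $E_0$-attraction of $\mathcal{A}$.
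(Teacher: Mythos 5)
The paper does not actually prove this statement: Theorem 4.16 is imported verbatim from \cite[Theorem 4.2.2]{gruy}, so there is no internal proof to compare against. Your reconstruction follows what is essentially the standard argument of that reference (take $\mathcal{A}$ to be the time-zero fiber of $\mathcal{U}$, use $T(t)\mathcal{U}=\mathcal{U}$ for time-independence, continuity of evaluation for $E_0$-compactness, reflexivity of $E$ to push the $L_\infty(0,+\infty;E)$ bound onto the fiber, and evaluation of the trajectory attraction for property (ii)), and it is correct in outline. Two points need repair or explicit justification. First, the inequality $\|u(h)-w(0)\|_{E_0}\leq\|T(h)u-w\|_{C([0,+\infty);E_0)}$ is literally false: the right-hand side is the Fr\'echet pre-norm $\sum_i 2^{-i}\frac{\|\cdot\|_{C([0,i];E_0)}}{1+\|\cdot\|_{C([0,i];E_0)}}$, which never exceeds $1$. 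The conclusion survives because pre-norm $\leq\varepsilon<1/2$ forces $\|\cdot\|_{C([0,1];E_0)}\leq 2\varepsilon/(1-2\varepsilon)$, hence control of the value at $t=0$; you should state this instead of the false pointwise bound. Second, you correctly isolate the crux of minimality as the fact that every $y\in\mathcal{U}$ is a $C([0,+\infty);E_0)$-limit of $T(h_k)u_k$ with $h_k\to\infty$ and $\{u_k\}\subset\mathcal{H}^+$ bounded in $L_\infty(0,+\infty;E)$, but you only gesture at it ("cf.\ Theorem 4.15"). In the present paper this is legitimate, since the only $\mathcal{U}$ ever produced (Theorem 5.2) arises as the $\omega$-limit of an absorbing set $P\subset\mathcal{H}^+$; alternatively, it can be derived from Definition 4.13(iv) alone by checking that the $C([0,+\infty);E_0)$-closure, inside $\mathcal{U}$, of the set of such limits itself satisfies conditions (i)--(iii) of Definition 4.13 (invariance and attraction follow from the compactness of $\mathcal{U}$ plus its attraction property), whence it must contain $\mathcal{U}$. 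Either way this auxiliary approximation lemma should be stated and proved, not merely invoked; once it is, your argument is complete.
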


\section{Attractors for the polymeric diffusion problem}

We are going to construct the minimal trajectory attractor
and the global attractor for problem (3.9)-(3.11). Let us choose $L_2 (\Omega)^2 $ as the space $E$ and the space
$H ^ {-\delta} (\Omega) ^2$ as the space $E_0$, where $ \delta \in (0,1] $ is a fixed
number. The trajectory space $ \mathcal {H}
^ + $ is the set of all weak solutions to (3.9)-(3.11) from class (3.13). It is contained in $L_\infty (0, + \infty; E) $ due to Lemma 5.1 (below) and in  $C ([0, + \infty); E_0) $  (clearly).


Dissipativity of problem (3.9)-(3.11) is stated by the following result:

 \begin{lemma} Let $(v,\tau)$ be a weak solution to (3.9)-(3.11). Then \be\frac {\nu D} 2 \|v(t)\|^2  +\frac E 2 \|\tau(t)+\nu v(t)\|^2+ \frac \mu 2 \|v(t)\|^2_{-1}\leq $$ $$ e^{-\gamma t}\left(\frac {\nu D} 2 \|v(0)\|^2  +\frac E 2 \|\tau(0)+\nu v(0)\|^2+\frac \mu 2 \|v(0)\|^2_{-1}\right)+ \Gamma \ee for all $ t>0$, where $\Gamma$ and $\gamma$ are some fixed positive numbers, which are independent of $v,\tau$ and $t$. \end{lemma}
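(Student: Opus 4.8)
The plan is to derive a differential inequality of the form $\Phi'(t)+\gamma\Phi(t)\le C_1$ for the functional
\[
\Phi(t)=\frac{\nu D}{2}\|v(t)\|^2+\frac E2\|\varpi(t)\|^2+\frac\mu2\|v(t)\|_{-1}^2,\qquad \varpi:=\tau+\nu v,
\]
which is exactly the quantity on the left of the asserted estimate, and then to conclude by Gronwall's lemma: it gives $\Phi(t)\le e^{-\gamma t}\Phi(0)+\frac{C_1}{\gamma}(1-e^{-\gamma t})\le e^{-\gamma t}\Phi(0)+\Gamma$ with $\Gamma:=C_1/\gamma$, which is the claim.

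The energy identity is obtained exactly as in the uniqueness part of the proof of Theorem 3.2, but keeping the inhomogeneous terms. I would work with the equivalent form (3.6)--(3.7) in the variables $(v,\varpi)$, recalling from that proof that $v,\varpi\in W_{loc}(\Omega,+\infty)$ and $\varpi'\in L_2(0,T;H^{-1}(\Omega))$; moreover $v\in L_2(0,T;H^1_0(\Omega))$ together with $v'\in L_2(0,T;H^{-1}(\Omega))$ shows $v\in H^1(0,T;H^{-1}(\Omega))$, so that $t\mapsto\|v(t)\|_{-1}^2$ is absolutely continuous with derivative $2(v',v)_{-1}$. Taking the $H^{-1}(\Omega)$-scalar product of (3.6) with $\mu v+\nu v'$ and using (2.3), (2.4), (2.6) gives
\[
\frac\mu2\frac{d}{dt}\|v\|_{-1}^2+\frac{\nu D}{2}\frac{d}{dt}\|v\|^2+\nu\|v'\|_{-1}^2 =-\mu D\|v\|^2-\mu E(v,\varpi)-\nu E\langle v',\varpi\rangle+(h,\mu v+\nu v')_{-1};
\]
taking the duality pairing of (3.7) with $E\varpi$ and using (2.4), (2.6) gives
\[
\frac E2\frac{d}{dt}\|\varpi\|^2+E(\beta\varpi,\varpi)=E(g,\varpi)+\mu E(v,\varpi)+\nu E\langle v',\varpi\rangle .
\]
Adding the two, the cross terms $\pm\mu E(v,\varpi)$ and $\pm\nu E\langle v',\varpi\rangle$ cancel (this cancellation is precisely what dictates the weights $\nu D/2,\,E/2,\,\mu/2$ in $\Phi$), and one is left with
\[
\Phi'+\nu\|v'\|_{-1}^2+\mu D\|v\|^2+E(\beta\varpi,\varpi)=E(g,\varpi)+(h,\mu v+\nu v')_{-1}.
\]

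It remains to estimate the right-hand side and absorb it. Since $\beta_0$ is bounded and $\Omega$ is bounded (hence $\varphi,\phi\in L_\infty(\Omega)$), the function $g(\cdot,v(t),\varpi(t))=\mu\varphi-\beta_0(v+\varphi,\varpi+\phi)\phi$ is bounded in $L_2(\Omega)$ uniformly in $t$, so $E(g,\varpi)\le\frac{E\beta_G}{4}\|\varpi\|^2+C$ by Young's inequality. Since $h=D\Delta\varphi+E\Delta\phi$ is a fixed element of $H^{-1}(\Omega)$, using (2.7) and Young's inequality one gets $(h,\mu v+\nu v')_{-1}\le\frac{\mu D}{2}\|v\|^2+\frac\nu2\|v'\|_{-1}^2+C$. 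Together with $(\beta\varpi,\varpi)\ge\beta_G\|\varpi\|^2$ (because $\beta\ge\beta_G>0$), these bounds give $\Phi'+\frac\nu2\|v'\|_{-1}^2+\frac{\mu D}{2}\|v\|^2+\frac{3E\beta_G}{4}\|\varpi\|^2\le C_1$ with a fixed constant $C_1$. Dropping the $\|v'\|_{-1}^2$ term and using $\|v\|_{-1}^2\le C\|v\|^2$ from (2.7) to dominate the $\|v\|_{-1}^2$-part of $\Phi$, one obtains $\frac{\mu D}{2}\|v\|^2+\frac{3E\beta_G}{4}\|\varpi\|^2\ge\gamma\Phi$ for a suitable $\gamma>0$, hence $\Phi'+\gamma\Phi\le C_1$, and Gronwall's lemma finishes the argument.

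I do not anticipate a genuine obstacle: this is a routine energy estimate that parallels the uniqueness argument of Theorem 3.2. The only points requiring a little attention are (i) the exact cancellation of the coupling cross terms after adding the two identities, which is built into the choice of $\Phi$; (ii) the uniform boundedness in $L_2(\Omega)$ of the non-autonomous source $g$, which rests on the boundedness of $\beta_0$ and of the boundary data; and (iii) the justification of the time differentiation of $\|v\|^2$, $\|\varpi\|^2$ and $\|v\|_{-1}^2$, which follows from (2.4) and the membership $v\in H^1(0,T;H^{-1}(\Omega))$.
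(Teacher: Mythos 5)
Your proposal is correct and follows essentially the same route as the paper: testing (3.6) with $\mu v+\nu v'$ in $H^{-1}(\Omega)$, pairing (3.7) with $E\varpi$, adding to cancel the cross terms, absorbing the right-hand side via Cauchy/Young, and applying the Gronwall-type lemma to $\chi'+\gamma\chi\leq C$. The only difference is cosmetic: you spell out the Young-inequality splitting of $(h,\mu v+\nu v')_{-1}$ and $E(g,\varpi)$ term by term, where the paper bounds the whole right-hand side at once by $C(\|v\|+\|v'\|_{-1}+\|\varpi\|)$ before absorbing.
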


\begin{proof} The pair $(v,\varpi=\tau+\nu v)$ satisfies (3.6), (3.7). Take the
$H^{-1}(\Omega)$-scalar product of (3.6) and $\mu v +\nu v'$ and the
"bra-ket"
 of (3.7) and $E\varpi$ for a.a. $t\in (0,\infty)$, and
add the results (cf. proof of Theorem 3.2): \be \frac \mu 2 \frac{d \|v\|^2_{-1}}{d t}+ \nu\|v'\|^2_{-1}+ \mu D\|v\| ^2 + \frac {\nu D} 2\frac{d \|v\|^2}{d t}  +\frac E 2 \frac{d \|\varpi\|^2}{d t}$$ $$+ E (\beta (x, v,\varpi) \varpi, \varpi) =(h, \mu v +\nu v')_{-1} + E(g (x,v,\varpi), \varpi).\ee

Hence, \be \frac \mu 2 \frac{d \|v\|^2_{-1}}{d t}+ \nu\|v'\|^2_{-1}+ \mu D\|v\| ^2 + \frac {\nu D} 2\frac{d \|v\|^2}{d t}  +\frac E 2 \frac{d \|\varpi\|^2}{d t}$$ $$+ E\beta_G \|\varpi\|^2 \leq C(\| v\|_{-1} +\|v'\|_{-1} + \|\varpi\|)\leq C(\| v\| +\|v'\|_{-1} + \|\varpi\|).\ee
The Cauchy inequality for scalars can be written in the form $C\eta \leq \epsilon \eta^2 + \frac {C^2}{4 \epsilon} .$ Thus, (5.3) implies \be \frac \mu 2 \frac{d \|v\|^2_{-1}}{d t}+  \frac {\mu D }2 \|v\| ^2 + \frac {\nu D} 2\frac{d \|v\|^2}{d t}  +\frac E 2 \frac{d \|\varpi\|^2}{d t}+ \frac {E\beta_G} 2 \|\varpi\|^2 \leq C.\ee

Let $\chi(t)=\frac \mu 2 \|v(t)\|^2_{-1}+ \frac {\nu D} 2 \|v(t)\|^2  +\frac E 2 \|\varpi(t)\|^2.$ Obviously, $$\frac {\mu D }2 \|v(t)\| ^2 + \frac {E\beta_G} 2 \|\varpi(t)\|^2  \geq \gamma \chi(t)$$ for some $\gamma>0$. Thus, $\chi'(t)+ \gamma \chi(t) \leq C$, so  $\chi(t)\leq e^{-\gamma t}\chi(0)+ (1+\gamma^{-1})C$ by \cite[Lemma II.1.3]{cvbook}, and (5.1) follows. \end{proof}

The main result of this section is \begin{theorem} The trajectory space $ \mathcal {H} ^ + $ possesses a minimal trajectory attractor and a global attractor. \end{theorem}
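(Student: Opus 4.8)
The strategy is to check the hypothesis of Theorem~4.15 and then invoke Theorem~4.16. Thus it suffices to exhibit an absorbing set $P$ for the trajectory space $\mathcal{H}^+$ which is relatively compact in $C([0,+\infty);E_0)$ and bounded in $L_\infty(0,+\infty;E)$; then Theorem~4.15 yields the minimal trajectory attractor $\mathcal{U}$, and Theorem~4.16 yields the global attractor $\mathcal{A}=\{y(0)\mid y\in\mathcal{U}\}$. To construct $P$, note that $\|\tau\|^2\le 2\|\tau+\nu v\|^2+2\nu^2\|v\|^2$, so Lemma~5.1 gives a radius $R>0$, depending only on $\Gamma$ and the fixed constants of the problem, such that every weak solution $(v,\tau)$ satisfies $\|v(t)\|^2+\|\tau(t)\|^2\le R^2$ for all $t\ge h$, where $h\ge 0$ depends only on a bound for the initial datum $(v(0),\tau(0))$ in $E$. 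Put
$$P=\bigl\{(v,\tau)\in\mathcal{H}^+\ :\ \|v(t)\|^2+\|\tau(t)\|^2\le R^2\ \text{for all}\ t\ge 0\bigr\}.$$
Since (3.9)--(3.11) is autonomous, a time translation of a weak solution is again a weak solution (its value at the translation time lies in $L_2(\Omega)\times H^1_0(\Omega)$ because of the class (3.13)), so $T(t)\mathcal{H}^+\subset\mathcal{H}^+$; together with the estimate above this gives $T(t)B\subset P$ for $t\ge h$ whenever $B\subset\mathcal{H}^+$ is bounded in $L_\infty(0,+\infty;E)$. Hence $P$ is absorbing, and it is bounded in $L_\infty(0,+\infty;E)$ by construction.

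It remains to show that $P$ is relatively compact in the Frechet space $C([0,+\infty);E_0)$, where $E_0=H^{-\delta}(\Omega)^2$, $\delta\in(0,1]$. For $(v,\tau)\in P$ one reads uniform bounds on the time derivatives straight off the equations. Interpreting (3.9) in $H^{-2}(\Omega)$ and using that $\Delta$ maps $L_2(\Omega)$ boundedly into $H^{-2}(\Omega)$ while $h\in L_2(\Omega)$ is a fixed function, one gets $\|v'(t)\|_{H^{-2}}\le C(\|v(t)\|+\|\tau(t)\|+1)\le C(R+1)$ for almost every $t>0$. For (3.10), since $\beta_G\le\beta_0\le\beta_R$ and $\varphi,\phi$ (hence $g$) are bounded on $\overline{\Omega}$, the nonlinearity $\gamma$ grows at most linearly in $(v,\tau)$, so $\|\tau'(t)\|=\|\gamma(\cdot,v(t),\tau(t))\|\le C(\|v(t)\|+\|\tau(t)\|+1)\le C(R+1)$ for almost every $t>0$. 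Consequently, uniformly over $(v,\tau)\in P$, the map $t\mapsto v(t)$ is Lipschitz into $H^{-2}(\Omega)$ and $t\mapsto\tau(t)$ is Lipschitz into $L_2(\Omega)$; interpolating these with the uniform $L_2(\Omega)$ bound through $\|w\|_{H^{-\delta}}\le C\|w\|_{L_2}^{1-\delta/2}\|w\|_{H^{-2}}^{\delta/2}$ produces a common (Holder) modulus of continuity for $t\mapsto(v(t),\tau(t))$ with values in $E_0$ on each interval $[0,T]$. Because the embedding $L_2(\Omega)\hookrightarrow H^{-\delta}(\Omega)$ is compact, the set $\{(v(t),\tau(t)):(v,\tau)\in P\}$ is precompact in $E_0$ for each fixed $t$. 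By the Arzela--Ascoli theorem the restrictions of $P$ to each $[0,T]$ are precompact in $C([0,T];E_0)$, and a diagonal argument over $T=1,2,\dots$, combined with the seminorm description of $C([0,+\infty);E_0)$ recalled in Section~2, upgrades this to relative compactness of $P$ in $C([0,+\infty);E_0)$.

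Once $P$ is in place, Theorem~4.15 provides the minimal trajectory attractor $\mathcal{U}$ for $\mathcal{H}^+$ and Theorem~4.16 the global attractor $\mathcal{A}=\{y(0)\mid y\in\mathcal{U}\}$, which finishes the proof. The step demanding real care is the relative-compactness claim: one has to see that the purely $L_2$ dissipative bound of Lemma~5.1 already forces, through the parabolic equation (3.9) and the pointwise ODE (3.10), uniform equicontinuity of the whole absorbing family in the weaker topology of $E_0$. The remaining parts are immediate applications of the abstract theorems of Section~4.
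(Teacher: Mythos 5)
Your proof is correct and follows essentially the same route as the paper: the same absorbing set built from the dissipative estimate of Lemma 5.1, the same uniform bounds on the time derivatives ($v'$ in $H^{-2}(\Omega)$ from (3.9), $\tau'$ in $L_2(\Omega)$ from (3.10)), and the same passage from compactness on finite intervals to the Frechet space $C([0,+\infty);E_0)$. The only difference is cosmetic: you establish the relative compactness in $C([0,M];E_0)$ by hand via the interpolation inequality and the Arzela--Ascoli theorem, whereas the paper simply invokes Simon's compactness theorem at that step.
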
 \begin{proof} Due to Theorems 4.15 and 4.16, it suffices to find an absorbing set for
the trajectory space $ \mathcal {H} ^ + $, which is relatively compact in $C ([0, + \infty); E_0) $
and bounded in $L_\infty (0, + \infty; E) $. Consider the set $P$ of weak solutions to (3.9)-(3.11) which satisfy the estimate \be\frac {\nu D} 2 \|v(t)\|^2  +\frac E 2 \|\tau(t)+\nu v(t)\|^2+ \frac \mu 2 \|v(t)\|^2_{-1}\leq 2 \Gamma, \forall t\geq 0. \ee It is an absorbing set for
the trajectory space $ \mathcal {H} ^ + $ and is
bounded in $L_\infty (0, + \infty; E) $. By (3.9), the set $\{v',\ (v,\tau)\in P\}$ is bounded in $L_{\infty} (0, + \infty; H^{-2}(\Omega)) $. Moreover, (3.10) yields that $\{\tau',\ (v,\tau)\in P\}$ is bounded in $L_\infty (0, + \infty; L_2(\Omega)) $. The embedding $E \subset E_0$ is compact. By \cite[Corollary 4]{sim1}, the set $\{y|_{[0,M]}, y\in P\}$ is relatively compact in $C ([0, M]; E_0) $ for any $M>0$.  This implies (cf. \cite[p. 183]{gruy}) that $P$ is relatively compact in $C ([0, + \infty); E_0) $. \end{proof}



\begin {thebibliography} {99}

\bibitem{am1} Amann, H. \textit{Global existence for a class of highly degenerate parabolic systems},
Japan J. Indust. Appl. Math. \textbf {8} (1991), 143--151.
\bibitem{am2} Amann, H. \textit{Highly degenerate quasilinear parabolic
systems}, Ann. Scuola Norm. Sup. Pisa Cl. Sci. \textbf {18} (1991),
135--166.
\bibitem {vib}  Babin, A.V., Vishik, M.I.  \textit{Attractors of evolution
equations}, Nauka, Moscow, 1989 (Eng. transl.: North-Holland, 1992).

\bibitem{ball} Ball, J.M. \textit{Continuity properties and global attractors of
generalized semiflows and the Navier-Stokes equations}, J. Nonlinear Sci. \textbf {7} (1997), 475--502.
\bibitem {vis2} Chepyzhov, V.V., Vishik, M.I. \textit{Evolution equations and their trajectory
attractors}, J. Math Pures Appl. \textbf{76} (1997), 913--964.
\bibitem{cvbook} Chepyzhov, V.V., Vishik, M.I. \textit{Attractors for Equations of Mathematical Physics},
AMS Colloquium Publications 49, Providence, RI, 2002.
\bibitem{chn0} Cohen, D.S., White, A.B., Jr. \textit{Sharp fronts due to diffusion and viscoelastic relaxation
in polymers}, SIAM J. Appl. Math. \textbf {51} (1991), 472--483.

\bibitem{chn1} Cohen, D.S., White, A.B., Jr., Witelski, T.P. \textit{Shock
formation in a multidimensional viscoelastic diffusive system},
SIAM J. Appl. Math. \textbf {55} (1995), 348--368.


\bibitem{dur} Durning, C. J. \textit{Differential sorption in viscoelastic fluids}, J. Polymer Sci., Polymer Phys. Ed. \textbf{23} (1985),
1831--1855.

\bibitem{ed} Edwards, D.A. \textit{A mathematical model for trapping skinning in
polymers}, Studies in Applied Mathematics \textbf{99} (1997), 49--80.
\bibitem{ed2} Edwards, D.A.  \textit{A spatially nonlocal model for polymer desorption}, Journal of Engineering Mathematics \textbf{53} (2005), 221--238.
\bibitem{cc} Edwards, D.A., Cairncross, R.A. \textit{Desorption overshoot in polymer-penetrant systems: Asymptotic and
computational results}, SIAM J. Appl. Math. \textbf{63} (2002), 98--115.
\bibitem{chn3} Edwards, D.A., Cohen, D.S. \textit{A mathematical model for a dissolving polymer}, AIChE J. \textbf{18} (1995), 2345--2355.

\bibitem{hu} Hu, B. \textit{Diffusion of penetrant in a polymer: a free boundary problem}, SIAM J. Math. Anal.
\textbf{22} (1991), 934--956.
\bibitem{bei} Hu, B., Zhang, J. \textit{Global existence for a class of non-Fickian polymer-penetrant
systems}, J. Partial Diff. Eqs. \textbf{9} (1996), 193--208.
\bibitem{kras} Krasnoselskii, M. \textit{Topological methods in the theory of nonlinear integral equations},
Gostehizdat, 1956 (Russian); Engl. transl., Macmillan, 1964.


\bibitem{lee} Lee, Sang-Wha. \textit{Relaxation Characteristics of Poly(vinylidene fluoride) and Ethylene-chlorotrifluoroethylene
in the Transient Uptake of Aromatic Solvents}, Korean J. Chem. Eng. \textbf{21} (2004), 1119--1125.

\bibitem{moi} Moise, I., Rosa, R., Wang, X.M. \textit{Attractors for noncompact semigroups via energy equations}, Nonlinearity \textbf{11} (1998), 1369--1393.
\bibitem{riv} Riviere, B., Shaw, S. \textit{Discontinuous Galerkin finite element approximation of nonlinear non-Fickian diffusion in viscoelastic polymers}, SIAM Journal on Numerical Analysis \textbf{44} (2006), 2650--2670.
\bibitem{sel} Sell, G. \textit{Global attractors for the three-dimensional
Navier-Stokes equations}, J. Dyn. Diff. Eq. \textbf {8} (1996),
1--33.
\bibitem{sim1}  Simon, J. \textit{Compact sets in the space $L^p(0,T; B)$}, Ann. Mat. Pura Appl. \textbf{146} (1987), 65--96.

\bibitem{skry}  Skrypnik, I. V. \textit{Methods for analysis of nonlinear elliptic boundary value problems}, Translations of Mathematical Monographs 139, Amer. Math. Soc., 1994.



\bibitem{tw} Thomas, N., Windle, A.H. \textit{Transport of methanol in poly-(methyl-methocry-late)}, Polymer \textbf{19} (1978), 255--265.

\bibitem{tw1} Thomas, N., Windle, A.H. \textit{A theory of Case II diffusion}, Polymer \textbf{23} (1982), 529--542.

\bibitem{diss} Vorotnikov, D.A., \textit{Dissipative solutions for equations of viscoelastic diffusion
in polymers}, J. Math. Anal. Appl. \textbf{339} (2008), 876--888.

\bibitem{var} Vorotnikov, D.A., \textit{Weak solvability for equations of viscoelastic diffusion in
polymers with variable coefficients}, J. Differential
Equations 
\textbf{246} (2009), 1038--1056.

\bibitem{iter} Vorotnikov, D.A. \textit{On iterating concentration and periodic regimes at the anomalous diffusion in polymers}, submitted.

\bibitem{nova} Vorotnikov, D.A. \textit{The second boundary value problem for equations of
viscoelastic diffusion in polymers}, In: Differential Equations: Systems, Applications and Analysis, F. Columbus (ed.), Nova Science, in preparation.

\bibitem{jmfm} Vorotnikov, D.A., Zvyagin, V.G. \textit{Trajectory and global attractors of the boundary
value problem for autonomous motion equations of viscoelastic
medium}, J. Math. Fluid Mech. \textbf{10} (2008), 19--44.

\bibitem{wit} Witelski, T.P. \textit{Traveling wave solutions for case II diffusion in polymers}, Journal of Polymer Science: Part B: Polymer Physics \textbf{34} (1996), 141--150.


\bibitem{gruy} Zvyagin, V.G., Vorotnikov, D.A. \textit{Topological approximation methods for evolutionary problems of nonlinear hydrodynamics}, de Gruyter Series in Nonlinear Analysis and Applications 12, Walter de Gruyter \& Co., Berlin, 2008. 

\end {thebibliography}

\end {document}